\newtheorem{theorem}{Theorem}[section]
\newtheorem{lemma}[theorem]{Lemma}
\newtheorem{corollary}[theorem]{Corollary}
\newtheorem{proposition}[theorem]{Proposition}
\theoremstyle{definition}
\newtheorem{definition}[theorem]{Definition}
\newtheorem{assumption}[theorem]{Assumption}
\theoremstyle{remark}
\newtheorem{remark}[theorem]{Remark}
\numberwithin{equation}{section}
\def\red#1{#1}
\def\blue#1{#1}
\DeclareMathOperator*{\argmin}{arg\,min}
\DeclareMathOperator\supp{supp}
\begin{document}

% \title[short text for running head]{full title}
\title{A Tikhonov regularization based algorithm for scattered data with random noise}

%    Only \author and \address are required; other information is
%    optional.  Remove any unused author tags.

%    author one information
% \author[short version for running head]{name for top of paper}
\author{Jiantang Zhang}
\address{School of Mathematical Sciences, Fudan University, Shanghai 200433, China}
\email{15110180015@fudan.edu.cn}

%    author two information
\author{Jin Cheng}
\address{School of Mathematical Sciences, Fudan University, Shanghai 200433, China}
\email{jcheng@fudan.edu.cn}

\author{Min Zhong}
\address{Nanjing Center for Applied Mathematics, 211135, Nanjing, Jiangsu Province}
\email{min.zhong@seu.edu.cn}

%    \subjclass is required by all journals except JAG.
% \subjclass[2020]{Primary }

% \date{}

% \dedicatory{}

%    The "communicated by" line appears only in PROC and JAG.
%\commby{}

%    Abstract is required.
\begin{abstract}
With the rapid growth of data, how to extract effective information from data is one of the most fundamental problems.
In this paper, based on Tikhonov regularization, we propose an effective method for reconstructing the function and its derivative from scattered data with random noise.
Since the noise level is not assumed small, we will use the amount of data for reducing the random error, and use a relatively small number of knots for interpolation.
An indicator function for our algorithm is constructed.
It indicates where the numerical results are good or may not be good.
The corresponding error estimates are obtained.
We show how to choose the number of interpolation knots in the reconstruction process for balancing the random errors and interpolation errors.
Numerical examples show the effectiveness and rapidity of our method.
It should be remarked that the algorithm in this paper can be used for on-line data.
% We develop a big data processing technique that reconstructs a function and its derivative from sample values with random noise.
% The solution space of function reconstruction is a finite dimensional vector space whose dimension is relatively small, which reduces computational burden.
% With a given dataset, we design an indicator function which shows the effective regions where reconstruction results are reliable.
% We also give asymptotic convergence rates, and provide numerical examples.
\end{abstract}

\maketitle

%    Text of article.

\section{Introduction} % (fold)
\label{sec:introduction}

Suppose that $f(x)$ is a function defined on $[0, 1]$.
We consider the following problem: for a positive integer $N$, given observation points $\{x_i\}_{i=1}^{N} \subseteq [0, 1]$ and corresponding noisy samples $y_i$ of function values $f(x_i)$ which satisfy
\begin{equation}\label{eq:problem-formulation}
	y_i = f(x_i) + \eta_i,\quad 1 \leq i \leq N,
\end{equation}
where \red{the} observation noise $\eta_i, 1 \leq i \leq N$ are uncorrelated \red{random variables with mean zero and variance $\sigma^2$}, that is,
\begin{equation*}
	\mathbb{E} [\eta_i] = 0,\quad \mathbb{E} [\eta_i \eta_j] = \sigma^2\delta_{ij},\quad 1 \leq i, j \leq N,
\end{equation*}
\red{in which the} $\mathbb{E}[\cdot]$ \red{stands} for expectation and $\delta_{ij}$ stands for Kronecker symbol. \red{We are willing to} construct a function $f_N(x)$ such that the derivative of $f_N(x)$ approximates the derivative of $f(x)$. \red{Such a problem is called numerical differentiation, which is widely applied in various problems \cite{Deans.2007,Gorenflo.2006,Hanke.2001}. Numerical differentiation is a classical ill-posed problem in the sense of unstable dependence of solutions on small perturbations of data. Therefore, regularization methods should be taken into consideration. There have been plenty of regularization methods for treating such ill-posed problems in one dimension or higher dimensions, see \cite{Hanke.2001,YBWang.2002,Cheng.2007,Lu.2006,Lu.2006b,Hu.2012} and references therein. However, those traditional approaches are based on accurate information of the noise bound $\delta$ or a good prediction of it, therefore not suitable for randomly distributed noise,} \blue{as the noise bound cannot be effectively controlled.} In the field of statistics, \cite{Wahba.1975,Craven.1978,Ragozin.1983,Scott.1989} considered similar data smoothing problems with independent or uncorrelated random noise. In these works, the data is assumed to be quasi-unform, when the sample size tends to infinity, the construction results converges to sought solutions. \blue{However, the memory usage also increases when sample size gets large\cite{Ahnert.2007}, and becomes a burden when dealing with very large amount of data.}

\red{Compared with classical numerical differentiation, there are two main difficulties in our problem. First, the sample size could be very large, and due to inevitable measurement errors in the observations, there are inevitable randomly distributed noise whose variance cannot be very small. How to take advantage from the large amount of data to reduce random noise and improve the accuracy is one of the most fundamental problems. In addition, an appropriate regularization parameter selection rule should be carefully discussed, which should not rely on the noise bound. Second, the position of observation points may not be quasi-uniform, or even randomly designed, how to determine reliable regions and provide asymptotic convergence property should be taken into consideration as well. }

In order to solve these barriers and difficulties, we propose a \red{statistical} Tikhonov regularization algorithm, which makes good use of big data at a relatively low computational cost.
Inspired by penalized splines in statistics \cite{OSullivan.1986,Eilers.1996,Wand.2008,Claeskens.2009}, we fix a set of equidistant interpolation knots, and search the regularized solution in a \red{projected} space. At the same time, we prove that only a small number of interpolation knots are \red{necessary} to achieve good reconstruction accuracy, thus the computational cost, \blue{especially memory usage,} is effectively reduced. We also propose a prior choice rule for regularization parameter, which gives optimal convergence rates.

To better deal with data that are unevenly spaced, we introduce the histogram of observation points as an indicator function to show reliable regions \red{in which} the results are supposed to be accurate.
In this way, it is unnecessary to impose additional {\it a prior} conditions. On the other hand, if observation points are randomly designed, we are able to provide asymptotic convergence rates in probability as well.

The rest of this paper is organized as follows.
In Section \ref{sec:set_up}, we formulate the problem and propose an on-line reconstruction algorithm, the {\it a prior} choice rule for regularization parameter is discussed.
In Section \ref{sec:error_analysis}, we give error analysis in confidence interval and convergence rates in probability.
In Section \ref{sec:numerical_examples}, we provide several numerical examples. The conclusions are contained in Section \ref{sec:conclusion}.

\section{Formulation of the problem} % (fold)
\label{sec:set_up}

% Suppose that $I = [0, 1]$, and $f(x)$ is an unknown function that belongs to the Sobolev space $W^{2, 2}(I)$.
% We consider the following problem: given some observation points $\{x_i\}_{i=1}^{N} \subset I$ and corresponding noisy samples $y_i$ of the function values $f(x_i)$, which satisfy
% \begin{equation*}
% 	y_i = f(x_i) + \eta_i,\quad i = 1, 2, \cdots N,
% \end{equation*}
% we want to construct a function $f_N(x)$ such that $f_N(x)$ approximates $f(x)$ and $f'_N(x)$ approximates $f'(x)$.
% Here, we suppose that observation errors $\{\eta_i\}_{i = 1}^{N}$ are uncorrelated, zero mean random variables with a common variance of $\sigma^2$, that is,
% \begin{equation}
% 	\mathbb{E} [\eta_i] = 0,\quad \mathbb{E} [\eta_i \eta_j] = \delta_{ij} \sigma^2,\quad 1 \leq i, j \leq N.
% \end{equation}

% For a typical large dataset, the size $N$ can be quite large, while the variance $\sigma^2$ cannot be arbitrarily small due to limitations on measurements.
% Therefore, we desire a low-cost algorithm that reduces reconstruction error by making good use of the large amount of samples.
%In this section, we construct the regularized solution and propose the reconstruction algorithm.
%In the following text, we denote
%\begin{equation}
%	\begin{split}
%	\bm{x}_N &= (x_1, x_2, \cdots, x_N)^\top, \\
%	\bm{y}_N &= (y_1, y_2, \cdots, y_N)^\top, \\
%	\bm{\eta}_N &= (\eta_1, \eta_2, \cdots, \eta_N)^\top,
%	\end{split}
%\end{equation}
%where $x_i, y_i, \eta_i, 1 \leq i \leq N$ are defined in \eqref{eq:problem-formulation}.

\subsection{Tikhonov functional and regularized solution} % (fold)
\label{sub:regularization_in_a_vector_space_of_a_proper_dimension}
In this part, we construct the regularized solution and propose the reconstruction algorithm. \blue{First, we} define a finite dimensional linear space $V_M$ in which the regularized solution \red{is established}.

\begin{definition}[Definition of $V_M$]\label{definition:vm}
	Let $M$ be a positive integer and mesh size $d = M^{-1}$.
	Define equidistant knots $\{p_j\}_{j \in \mathbb{Z}}$ by
	\begin{equation}\label{eq:pj-definition}
		p_j = j d,\quad j \in \mathbb{Z},
	\end{equation}
	$V_M$ is the vector space of all cubic spline functions with knots $\{p_j\}_{j=0}^{M}$.
\end{definition}

\begin{remark}
	Suppose that $f(x) \in W^{2, 2}(0, 1)$, the cubic spline functions can \red{provide} good approximations for $f(x)$ and its derivative.
\end{remark}

Here, $M$ \red{represents the dimension of the regularized solution space, which significantly affects both} the computational cost and the approximation accuracy, thus should be chosen appropriately.
We will discuss the choice of $M$ in following Remark \ref{remark:choice-of-m}.
% Tikhonov regularization is widely applied to ill-posed problems.
% In data smoothing, Tikhonov regularization also has many applications \cite{Hanke.2001} \cite{Cheng.2007} \cite{Wahba.1975} \cite{Scott.1989}.
% In these approaches, if the regularization term is $L^2$ norm of second order derivative, the solution will be a cubic spline function whose knots are identical to observation points $\{x_i\}_{i = 1}^{N}$.
% When sample size $N$ is large, these approaches have too much computational burden.

% One innovation point of our approach is to find the minimizer of Tikhonov functional in a finite dimensional linear space that is chosen according to requirements for accuracy rather than observation points.
% In this way, the number of degrees of freedom to be determined can be reduced to a great extent without compromising on the result accuracy, thus computational burden is alleviated.
% In fact, our approach gives reconstruction results that are less susceptible to random noise thanks to fewer degrees of freedom.
% At the end of this section, we provide an on-line algorithm that computes this regularized solution based on sequential data.

%Next, we introduce the Tikhonov functional.
%Since we want to approximate the first-order derivative, we use the $L^2$ norm of second-order derivative as regularization term.
For $N\geq 2$, denote the noisy sample $\bm{y}_N = (y_1, y_2, \cdots, y_N)^\top$ and define the following Tikhonov functional
\begin{equation}\label{eq:tikhonov-functional}
	J(g; \alpha_N, \bm{y}_N) = \frac{1}{N} \sum_{i=1}^{N} \big( g(x_i) - y_i \big)^2 + \alpha_N \Vert g'' \Vert_{L^2(0, 1)}^2,
\end{equation}
in which $g(x) \in W^{2, 2}(0, 1)$, and $\alpha_N > 0$ is a regularization parameter. Consider the minimization problem
\begin{equation}\label{minimization}
	f_N = \argmin_{g \in V_M} J(g; \alpha_N, \bm{y}_N),
\end{equation}
\red{we use $f_N$ as the approximated solution of numerical differentiation problem.}

The cubic B-splines \cite{Micula.1999} can be utilized to construct a basis of linear subspace $V_M$, see Appendix \ref{sec:appendix-a-b-splines} for its $(M+3)$ basis $\{\psi_j\}_{j = -1}^{M+1}$. 
% In addition, for arbitrary $\bm{\lambda}_N = (\lambda_1, \lambda_2, \cdots, \lambda_N)^\top$, it can be proved that there exists an linear isomorphism from $\mathbb{R}^{M+3}$ to $V_M$ by
\blue{As a notation, for a column vector $\bm{\lambda}_N = (\lambda_{-1}, \lambda_0, \lambda_1, \cdots, \lambda_{M+1})^\top \in \mathbb{R}^{M+3}$, we define a linear isomorphism from $\mathbb{R}^{M+3}$ to $V_M$ by}

\begin{equation*}
	\begin{split}
		\Phi : \mathbb{R}^{M+3} &\to V_M, \\
		\bm{\lambda} &\mapsto \Phi[\bm{\lambda}] = \sum_{j=-1}^{M+1} \lambda_j \psi_j.
	\end{split}
\end{equation*}
We also introduce the following proposition.

\begin{proposition}
    \blue{For arbitrary $\bm{\lambda} \in \mathbb{R}^{M+3}$, the value of function $\Phi[\bm{\lambda}]$ at $x \in [0, 1]$ can be expressed as
    \begin{equation}\label{eq:hx-property}
        \Phi[\bm{\lambda}](x) = H_x \bm{\lambda},
    \end{equation}
    in which $(M+3)$-dimensional row vector $H_x = ( \psi_{-1}(x), \psi_0(x), \cdots, \psi_M(x), \psi_{M+1}(x) )$.
    The $L^2$ norm of the second order derivative of $\Phi[\bm{\lambda}]$ can be expressed as
    \begin{equation}\label{eq:precision-matrix-property}
        \Vert \Phi[\bm{\lambda}]''(x) \Vert_{L^2(0, 1)}^2 = \bm{\lambda}^\top P \bm{\lambda},
    \end{equation}
    in which $P \in \mathbb{R}^{(M+3) \times (M+3)}$ is defined by
    \begin{equation*}
        P = (p_{ij})_{i, j = -1}^{M+1},\quad p_{ij} = \int_0^1 \psi_i''(x) \psi_j''(x) \,d{x}.
    \end{equation*}}
\end{proposition}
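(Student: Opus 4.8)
The plan is to verify both identities directly from the definition of the linear isomorphism $\Phi$ together with elementary linearity; no deep ingredient is needed, and the two parts are essentially bookkeeping once the correct index ranges are fixed.

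First I would prove \eqref{eq:hx-property}. By the very definition $\Phi[\bm{\lambda}] = \sum_{j=-1}^{M+1} \lambda_j \psi_j$, and since each $\psi_j$ is a scalar-valued function, evaluation at a point $x \in [0,1]$ yields $\Phi[\bm{\lambda}](x) = \sum_{j=-1}^{M+1} \lambda_j \psi_j(x)$. This is precisely the product of the $(M+3)$-dimensional row vector $H_x = (\psi_{-1}(x), \psi_0(x), \ldots, \psi_{M+1}(x))$ with the column vector $\bm{\lambda} = (\lambda_{-1}, \lambda_0, \ldots, \lambda_{M+1})^\top$, i.e. $\Phi[\bm{\lambda}](x) = H_x \bm{\lambda}$.

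Next I would prove \eqref{eq:precision-matrix-property}. I would first recall that each cubic B-spline $\psi_j$ constructed in Appendix \ref{sec:appendix-a-b-splines} is $C^2$ on $[0,1]$ with piecewise-linear (hence bounded) second derivative, so $\psi_j \in W^{2,2}(0,1)$ and consequently $\Phi[\bm{\lambda}] \in W^{2,2}(0,1)$, with $\Phi[\bm{\lambda}]'' = \sum_{j=-1}^{M+1} \lambda_j \psi_j''$ (differentiation commutes with the finite sum). Then I would expand the squared $L^2$ norm by bilinearity of the inner product:
\[
\Vert \Phi[\bm{\lambda}]'' \Vert_{L^2(0,1)}^2 = \int_0^1 \Big( \sum_{i=-1}^{M+1} \lambda_i \psi_i''(x) \Big) \Big( \sum_{j=-1}^{M+1} \lambda_j \psi_j''(x) \Big) \, dx = \sum_{i=-1}^{M+1} \sum_{j=-1}^{M+1} \lambda_i \lambda_j \int_0^1 \psi_i''(x) \psi_j''(x) \, dx,
\]
where interchanging the two finite sums with the integral is justified since each product $\psi_i'' \psi_j''$ is bounded and measurable on $[0,1]$. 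Recognizing $\int_0^1 \psi_i'' \psi_j'' = p_{ij}$ and the resulting double sum as the quadratic form $\bm{\lambda}^\top P \bm{\lambda}$ finishes the argument.

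I do not expect a genuine obstacle here; the only points that deserve a line of care are the index conventions (the basis runs over $j = -1, \ldots, M+1$, so $H_x$ is a row vector of length $M+3$ and $P$ is the $(M+3)\times(M+3)$ matrix indexed accordingly) and the remark that $P$, being the Gram matrix of the functions $\{\psi_j''\}$, is symmetric and positive semidefinite — a fact that is immediate from \eqref{eq:precision-matrix-property} and will be used implicitly in the later analysis of the Tikhonov functional \eqref{eq:tikhonov-functional}.
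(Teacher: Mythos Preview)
Your proposal is correct. The paper itself does not include a proof of this proposition, treating both identities as immediate consequences of the definition of $\Phi$; your direct verification via the definition and bilinearity of the $L^2$ inner product is exactly the natural argument, and your side remark that $P$ is symmetric positive semidefinite is indeed used later in the proof of Theorem~\ref{thm:unique-minimizer-of-tikhonov-functional-in-vm}.
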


% \begin{proposition}
% For arbitrary $x\in[0,1]$, define a $(M+3)$-dimensional row vector $H_x$ by
% 	\begin{equation*}
% 		H_x = \big( \psi_{-1}(x), \psi_0(x), \cdots, \psi_M(x), \psi_{M+1}(x) \big).
% 	\end{equation*}
% Then, for any $\bm{\lambda} \in \mathbb{R}^{(M+3)}$, there holds
% 	\begin{equation}\label{eq:hx-property}
% 		\Phi[\bm{\lambda}](x) = H_x \bm{\lambda},
% 	\end{equation}
% and
% 	\begin{equation}\label{eq:precision-matrix-property}
% 		\bm{\lambda}^\top P \bm{\lambda} = \Vert \Phi[\bm{\lambda}]''(x) \Vert_{L^2(0, 1)}^2,
% 	\end{equation}
% in which the matrix $P \in \mathbb{R}^{(M+3) \times (M+3)}$ is defined by
% 	\begin{equation*}
% 		P = (p_{ij})_{i, j = -1}^{M+1},\quad p_{ij} = \int_0^1 \psi_i''(x) \psi_j''(x) \,d{x}.
% 	\end{equation*}
% \end{proposition}

\begin{remark}
	Since
	\begin{equation*}
		\supp \psi_j = [p_{j-2}, p_{j+2}],\quad j = -1, 0, \cdots, M+1,
	\end{equation*}
	the matrix $P$ is a band matrix with a bandwidth of $3$.
	On the other hand, the row vector $H_x$ has at most $4$ nonzero elements, and their positions are continuous.
	Thus, $H_x^\top H_x$ is also a band matrix with a maximum bandwidth of $3$.
\end{remark}

Next, we illustrate how to minimize the Tikhonov functional \eqref{eq:tikhonov-functional} in the subspase $V_M$. Define a matrix $H_N \in \mathbb{R}^{N \times (M+3)}$ by
\begin{equation*}
	H_N = \begin{pmatrix}
		H_{x_1} \\ \vdots \\ H_{x_N}
	\end{pmatrix},
\end{equation*}
then, for arbitrary $\bm{\lambda} \in \mathbb{R}^{(M+3)}$, the Tikhonov functional \eqref{eq:tikhonov-functional} \red{can be rewritten in a matrix form},
\begin{equation}\label{eq:tikhonov-functional-vector-form}
	J(\Phi[\bm{\lambda}]; \alpha_N, \bm{y}_N) = \frac{1}{N} (H_N \bm{\lambda} - \bm{y}_N)^\top (H_N \bm{\lambda} - \bm{y}_N) + \alpha_N \bm{\lambda}^\top P \bm{\lambda}.
\end{equation}

\begin{theorem}\label{thm:unique-minimizer-of-tikhonov-functional-in-vm}
   Suppose $N \geq 2$, and the observation points $\{x_i\}_{i=1}^{N}$ are not identical. Then, the Tikhonov minimization problem \eqref{minimization} has a unique minimizer
	\begin{equation*}
		f_N = \Phi[\bm{\lambda}_N].
	\end{equation*}
The coefficients $\bm{\lambda}_N \in \mathbb{R}^{(M+3)}$ can be solved from the linear system
	\begin{equation}\label{eq:lambda-n-definition}
		\Big( \alpha_N P + \frac{1}{N} H_N^\top H_N \Big) \bm{\lambda}_N = \frac{1}{N} H_N^\top \bm{y}_N.
	\end{equation}
\end{theorem}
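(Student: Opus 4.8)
The plan is to transport the variational problem through the isomorphism $\Phi$ to a finite-dimensional quadratic minimization over $\mathbb{R}^{M+3}$, show the associated matrix is symmetric positive definite, and then read off the unique minimizer from the first-order condition. Since $\Phi:\mathbb{R}^{M+3}\to V_M$ is a linear isomorphism, $g\mapsto J(g;\alpha_N,\bm{y}_N)$ attains a minimum over $V_M$ at $g=\Phi[\bm{\lambda}]$ if and only if $\bm{\lambda}$ minimizes $\bm{\lambda}\mapsto J(\Phi[\bm{\lambda}];\alpha_N,\bm{y}_N)$ over $\mathbb{R}^{M+3}$. By \eqref{eq:tikhonov-functional-vector-form} this last functional equals $\bm{\lambda}^\top A_N \bm{\lambda} - \tfrac{2}{N}\bm{\lambda}^\top H_N^\top \bm{y}_N + \tfrac1N \bm{y}_N^\top\bm{y}_N$, where $A_N := \alpha_N P + \tfrac1N H_N^\top H_N$, a symmetric matrix.

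The key step is the positive definiteness of $A_N$, and this is the only point where the hypotheses are used. By \eqref{eq:precision-matrix-property} one has $\bm{\lambda}^\top P \bm{\lambda} = \Vert \Phi[\bm{\lambda}]'' \Vert_{L^2(0,1)}^2 \geq 0$, and using \eqref{eq:hx-property}, $\bm{\lambda}^\top H_N^\top H_N \bm{\lambda} = \Vert H_N \bm{\lambda}\Vert_2^2 = \sum_{i=1}^{N}\Phi[\bm{\lambda}](x_i)^2 \geq 0$; since $\alpha_N>0$, $A_N$ is positive semidefinite. If $\bm{\lambda}^\top A_N\bm{\lambda}=0$, then both quadratic forms vanish: $\Vert\Phi[\bm{\lambda}]''\Vert_{L^2(0,1)}=0$ forces $\Phi[\bm{\lambda}]$ to be an affine function on $[0,1]$, while $\sum_i \Phi[\bm{\lambda}](x_i)^2=0$ forces $\Phi[\bm{\lambda}](x_i)=0$ for every $i$. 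As the points $\{x_i\}_{i=1}^N$ are not all identical, this affine function has two distinct zeros, hence $\Phi[\bm{\lambda}]\equiv 0$, and injectivity of $\Phi$ gives $\bm{\lambda}=0$. Thus $A_N$ is positive definite, hence invertible.

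Consequently the quadratic functional is strictly convex and coercive. Setting $\bm{\lambda}_N := A_N^{-1}\big(\tfrac1N H_N^\top \bm{y}_N\big)$ — i.e. the unique solution of \eqref{eq:lambda-n-definition} — completing the square yields $J(\Phi[\bm{\lambda}];\alpha_N,\bm{y}_N) = (\bm{\lambda}-\bm{\lambda}_N)^\top A_N (\bm{\lambda}-\bm{\lambda}_N) + c$ with $c$ independent of $\bm{\lambda}$, so by positive definiteness the minimum is attained at $\bm{\lambda}=\bm{\lambda}_N$ and nowhere else. Transporting back through $\Phi$, $f_N=\Phi[\bm{\lambda}_N]$ is the unique minimizer of \eqref{minimization}. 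I expect no genuine obstacle here: the argument is standard finite-dimensional quadratic optimization, and the one substantive ingredient is the positive-definiteness argument above, which hinges precisely on $\alpha_N>0$ together with the non-degeneracy of the observation set.
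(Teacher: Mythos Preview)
Your proof is correct and follows essentially the same approach as the paper: express the functional as a quadratic form in $\bm{\lambda}$, show the Hessian matrix $\alpha_N P + \tfrac{1}{N}H_N^\top H_N$ is positive definite, and read off the minimizer from the first-order condition. In fact you supply more detail than the paper, which simply asserts that positive definiteness is ``easy to conclude'' from the semidefiniteness of $P$ and $H_N^\top H_N$ together with the non-identical observation points; your argument via the affine-kernel-of-$P$ observation is exactly the missing justification.
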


\begin{proof}
	Since \eqref{eq:tikhonov-functional-vector-form} is a quadratic form with respect to $\bm{\lambda}$, it can be written as
	\begin{equation}\label{eq:tikhonov-functional-quadratic-form}
		J(\Phi[\bm{\lambda}]; \alpha_N, \bm{y}_N) = \frac{1}{2} (\bm{\lambda} - \bm{\lambda}_N)^\top A (\bm{\lambda} - \bm{\lambda}_N) + c,
	\end{equation}
	where
	\begin{equation*}
		A = \frac{\partial^2}{\partial \bm{\lambda}^2} J(\Phi[\bm{\lambda}]; \alpha_N, \bm{y}_N) = \frac{2}{N} H_N^\top H_N + 2 \alpha_N P,
	\end{equation*}
	\blue{and $c \in \mathbb{R}$ is independent of $\bm{\lambda}$.}
	Since $P$ and $H_N^\top H_N$ are both positive semidefinite and $\{x_i\}_{i=1}^{N}$ are not all identical, it is easy to conclude $A$ is positive definite. \blue{Since the derivative $\frac{\partial}{\partial \bm{\lambda}} J(\Phi[\bm{\lambda}]; \alpha_N, \bm{y}_N)$ takes $\bm{0}$ only at $\bm{\lambda} = \bm{\lambda}_N$, it follows that}
	\begin{equation*}
		\Big( \alpha_N P + \frac{1}{N} H_N^\top H_N \Big) \bm{\lambda}_N = \frac{1}{N} H_N^\top \bm{y}_N.
	\end{equation*}
\end{proof}

\subsection{Algorithm} % (fold)
\label{sub:algorithm_description}
%We give an algorithm in pseudo code that constructs the approximant $f_N$.
%In the following algorithm, we choose the regularization parameter $\alpha_N$ in prior by
%\begin{equation}\label{eq:prior-choice-of-regularization-parameter}
%	\alpha_N = \frac{\sigma^2 M}{N} + d^4,
%\end{equation}
%where $d = M^{-1}$ is the mesh size.
%The reason of this prior choice is to control both random error and interpolation error.
%Corresponding error bounds and convergence rates are given in Section \ref{sec:error_analysis}.
\ 

\begin{algorithm}[H]
\caption{\red{The Online Tikhonov regularization for scattered data with random noise}}
\label{algorithm:basic}
\begin{algorithmic}[1]
  \Require The number of knots $M$, mesh size $d=1/M$, the number of sample $N$, the observation data $\{(x_i, y_i)\}_{i = 1}^{N}$ and the variance $\sigma^2$;
  \Ensure The approximate solution $f_N(x) \in V_M$.
  \State Initialize $A_0 = 0 \in \mathbb{R}^{(M+3) \times (M+3)}$;
  \State Initialize $\bm{b}_0 = \bm{0} \in \mathbb{R}^{(M+3)}$
  \State Generate the matrix $P \in \mathbb{R}^{(M+3) \times (M+3)}$, where
  \begin{equation*}
  P = (p_{ij})_{i,j=-1}^{M+1},\quad p_{ij} = \int_0^1 \psi_i''(x) \psi_j''(x) \,d{x};
  \end{equation*}
  \For {$i \gets 1, 2, \cdots, N$}
    \State Generate row vector $H_{x_i} = \big( \psi_{-1}(x_i), \psi_0(x_i), \cdots, \psi_M(x_i), \psi_{M+1}(x_i) \big)$;
    \State Update $A_i$ by $A_i \gets \frac{i-1}{i} A_{i-1} + \frac{1}{i} H_{x_i}^\top H_{x_i}$;
  	\State Update $\bm{b}_i$ by $\bm{b}_i \gets \frac{i-1}{i} \bm{b}_{i-1} + \frac{1}{i} H_{x_i}^\top y_i$;
  \EndFor
  \State Choose $\alpha_N = M \sigma^2 / N + d^4$, and solve linear system
  	\begin{equation*}
	( \alpha_N P +  A_N ) \bm{\lambda}_{N} =  \bm{b}_N
	\end{equation*}
	for $\bm{\lambda}_N \in \mathbb{R}^{(M+3)}$;
  \State Give function reconstruction result $f_N(x)$ by $f_N = \Phi[\bm{\lambda}_N]$.
\end{algorithmic}
\end{algorithm}

% (1) flops; (2) online manner; (3) when to enlarge M
\begin{remark}
Since the row vector $H_x$ has at most $4$ nonzero elements and the matrix $\alpha_N P +  A_N$ has a bandwidth of $3$, thus the \red{computational complexity }at Line $5$, $6$ and $7$ are $\mathcal{O}(1)$,
and the computational complexity at Line $10$ is $\mathcal{O}(M)$. In addition, the total data storage of this algorithm is $\mathcal{O}(M)$.

The algorithm supports an on-line update if $N$ increases.
When new data are considered, one may continue to run the algorithm from Line $5$ to Line $10$ without reprocessing old data. When $N$ becomes \blue{so large that} $M \sigma^2 / N < d^4$, one needs to increase $M$ and restart the algorithm \blue{to further improve accuracy}.
\end{remark}
\begin{remark}
\blue{The {\it a prior} parameter choice strategy of
\begin{equation*}
    \alpha_N = \frac{M \sigma^2}{N} + d^4
\end{equation*}
at Line 9 is a balance between stability and accuracy. Corresponding theoretical analysis are discussed in the next section.}
\end{remark}

%\begin{remark}[The choice of $M$]\label{remark:choice-of-m}
%	For the value of $M$, on the one hand, if $N$ is known in advance, according to Theorem \ref{theorem:convergence-random-obs-points}, one may choose $M \sim N^{1/5}$ for optimal convergence rates.
%	On the other hand, if $N$ is unknown, one may choose $M$ according to requirements for accuracy, since the reconstruction accuracy is roughly the same as the cubic spline interpolation with knots $\{p_j\}_{j=0}^{M}$ when $N$ is sufficiently large.
%\end{remark}

% subsection algorithm_description (end)

% section problem_formulation (end)

\section{Theoretical analysis} % (fold)
\label{sec:error_analysis}
\subsection{The indicator function and preliminary lemmas} % (fold)
\label{sub:preliminary_lemmas}

Since the observation points may not be quasi-uniform, the distribution of these points especially affects the approximation accuracy. For example, the approximated solution at places with fewer observation points is likely to be less accurate. Therefore, the histogram of observation points \red{will be introduced} to indicate their distribution and show reliable intervals of reconstruction.

\begin{definition}\label{definition:indicator-function}
   \red{Divide} $[0, 1]$ into $M$ subintervals $I_j, 1 \leq j \leq M$, that is,
	\begin{equation}\label{eq:ij-definition}
		\begin{split}
		I_1 &= [0, d], \\
		I_j &= (jd - d, jd].
		\end{split}
	\end{equation}
	For $j = 1, 2, \cdots, M$, denoting by $N_j$ the number of observation points which belong to $I_j$, indicator function $\rho_N(x)$ is \red{defined as}
	\begin{equation}\label{eq:rho-n-definition}
		\rho_N(x) = \begin{cases}
			\rho_{N, j} = N_j / N d, &\text{ if }x \in I_j, \\
			0, &\text{ if } x \notin [0, 1].
		\end{cases}
	\end{equation}
	% In the following text, we suppose that the upper bound of $\rho_N(x)$ satisfies
	% \begin{equation}
	% 	\sup_{x \in [0, 1]} \rho_N(x) \leq \beta_N.
	% \end{equation}
\end{definition}
From the definition,
	\begin{equation*}
		I_1 \cup I_2 \cup \cdots \cup I_M = [0, 1]
	\end{equation*}
	and $I_j, 1 \leq j \leq M$ do not intersect with each other. \red{It is obvious that,}
	\begin{equation*}
		N_1 + N_2 + \cdots N_M = N
	\end{equation*}
	and
	\begin{equation*}
		\int_0^1 \rho_N(x) \,d{x} = 1.
	\end{equation*}

\red{Utilizing the indicator function, the following preliminary lemmas are necessarily be provided, which will be the foundation of formal theoretical analysis.} The key result is Lemma \ref{lemma:estimation-longer-interval}, which bounds the $L^2$ norm of a function by its mean squared value at observation points and its second order derivative.
\begin{lemma}\label{lemma:point-l2-estimation}
\red{For arbitrary} $u(x) \in C^1[a, b]$ and $x_0 \in [a, b]$, \red{the $L^2$ norm of $u$ and the squared value of $u$ at $x_0$ can be estimated as}
	\begin{equation*}
	\begin{split}
		\Vert u \Vert_{L^2(a, b)}^2 &\leq 2 \Big( (b-a) u^2(x_0) + (b-a)^2 \Vert u' \Vert_{L^2(a, b)}^2 \Big), \\
		(b-a) u^2(x_0) &\leq 2 \Vert u \Vert_{L^2(a, b)}^2 + 2 (b-a)^2 \Vert u' \Vert_{L^2(a, b)}^2.
	\end{split}
	\end{equation*}
\end{lemma}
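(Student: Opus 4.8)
The plan is to prove both inequalities by writing $u(x)$ in terms of $u(x_0)$ plus an integral of $u'$, using the fundamental theorem of calculus, and then applying the Cauchy--Schwarz inequality together with the elementary bound $(s+t)^2 \leq 2s^2 + 2t^2$.

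First I would fix $x \in [a,b]$ and write
\begin{equation*}
	u(x) = u(x_0) + \int_{x_0}^{x} u'(t)\,dt,
\end{equation*}
which is valid since $u \in C^1[a,b]$. Taking squares and using $(s+t)^2 \leq 2s^2 + 2t^2$ gives
\begin{equation*}
	u^2(x) \leq 2 u^2(x_0) + 2 \left( \int_{x_0}^{x} u'(t)\,dt \right)^2.
\end{equation*}
By Cauchy--Schwarz, $\left( \int_{x_0}^{x} u'(t)\,dt \right)^2 \leq |x - x_0| \int_{x_0}^{x} (u'(t))^2\,dt \leq (b-a) \Vert u' \Vert_{L^2(a,b)}^2$, so that
\begin{equation*}
	u^2(x) \leq 2 u^2(x_0) + 2(b-a) \Vert u' \Vert_{L^2(a,b)}^2
\end{equation*}
for every $x \in [a,b]$. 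Integrating this bound over $x \in [a,b]$ yields the first inequality, since $\int_a^b u^2(x_0)\,dx = (b-a)u^2(x_0)$ and the second term contributes $(b-a)$ times a constant, giving the factor $(b-a)^2$.

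For the second inequality I would instead solve for $u(x_0)$: from the same identity, $u(x_0) = u(x) - \int_{x_0}^{x} u'(t)\,dt$, so $u^2(x_0) \leq 2u^2(x) + 2(b-a)\Vert u'\Vert_{L^2(a,b)}^2$ for every $x$. Integrating over $x \in [a,b]$ and dividing by $(b-a)$ — or equivalently noting that $(b-a) u^2(x_0) = \int_a^b u^2(x_0)\,dx \leq \int_a^b \big( 2u^2(x) + 2(b-a)\Vert u'\Vert_{L^2}^2 \big)\,dx = 2\Vert u\Vert_{L^2(a,b)}^2 + 2(b-a)^2 \Vert u'\Vert_{L^2(a,b)}^2$ — gives the claim directly.

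There is no real obstacle here; the only point requiring a little care is keeping track of which factor of $(b-a)$ comes from the Cauchy--Schwarz step (length of the integration interval) versus which comes from integrating a constant over $[a,b]$, so that the powers match the statement exactly. Everything else is a routine application of the fundamental theorem of calculus and Cauchy--Schwarz.
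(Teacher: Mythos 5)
Your proof is correct and follows essentially the same route as the paper's: the fundamental theorem of calculus, the bound $(s+t)^2 \leq 2s^2+2t^2$ combined with Cauchy--Schwarz, and then integration in $x$ over $[a,b]$, with the roles of $x$ and $x_0$ exchanged for the second inequality. The bookkeeping of the two factors of $(b-a)$ is handled correctly, so there is nothing to add.
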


\begin{proof}
\red{For arbitrary} $x \in [a, b]$, we have
\begin{equation*}
	u(x) = u(x_0) + \int_{x_0}^{x} u'(s) \,d{s}.
\end{equation*}
Taking squares on both sides gives that
\begin{equation}\label{eq:point-l2-estimation-tmp}
	u^2(x) \leq 2 u^2(x_0) + 2 (b-a) \int_a^b \vert u'(s) \vert^2 \,d{s}.
\end{equation}
Then we integrate over $[a, b]$ with respect to $x$,
\begin{equation*}
	\int_a^b u^2(x) \,d{x} \leq 2 (b-a) u^2(x_0) + 2 (b-a)^2 \int_a^b \vert u'(s) \vert^2 \,d{s}.
\end{equation*}
Exchanging $x$ and $x_0$ in \eqref{eq:point-l2-estimation-tmp} then integrating over $[a, b]$ with respect to $x$, \red{it follows that,}
\begin{equation*}
	(b-a) u^2(x_0) \leq 2 \int_a^b u^2(x) \,d{x} + 2 (b-a)^2 \int_a^b \vert u'(x) \vert^2 \,d{x}.
\end{equation*}
\end{proof}

\red{When applying the above estimate to  specific subinterval, the following lemma is valid.}
\begin{lemma}\label{lemma:histogram-inf}
	For \red{nonnegative} integers $p, q$ with $0 \leq p < q \leq M$, let subinterval $I' := (pd, qd)$. Suppose \red{the indicator function on $I'$ can be bounded below by}
	\begin{equation*}
		\inf_{x \in I'} \rho_{N}(x) \geq \gamma_N > 0.
	\end{equation*}
Then, for arbitrary $u(x) \in C^1[0, 1]$, \blue{its $L^2$ norm can be estimated as}
	\begin{equation*}
		\Vert u \Vert_{L^2(I')}^2 \leq 2 \Big( \frac{1}{N \gamma_N} \sum_{i=1}^{N} u^2(x_i) + d^2 \Vert u' \Vert_{L^2(I')}^2 \Big).
	\end{equation*}
\end{lemma}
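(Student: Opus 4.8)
\emph{Proof proposal.} The plan is to localize the estimate to the $d$-length subintervals $I_{p+1},\dots,I_q$ that tile $I'$, apply the first inequality of Lemma~\ref{lemma:point-l2-estimation} on each of them at the observation points it contains, and then sum. Since $I_{p+1}\cup\cdots\cup I_q=(pd,qd\,]$ agrees with $I'=(pd,qd)$ up to a null set and the $I_j$ are pairwise disjoint, we have $\Vert u\Vert_{L^2(I')}^2=\sum_{j=p+1}^{q}\Vert u\Vert_{L^2(I_j)}^2$, so it suffices to control each term.

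First I would note that on every $I_j$ with $p+1\le j\le q$ the function $\rho_N$ is the constant $\rho_{N,j}=N_j/(Nd)$, and the interior of $I_j$ lies in $I'$; hence the hypothesis forces $\rho_{N,j}\ge\gamma_N$, i.e. $N_j\ge Nd\gamma_N>0$. Since $N_j$ is an integer this in particular gives $N_j\ge 1$, so $I_j$ contains at least one observation point. Fixing $j$ and applying the first inequality of Lemma~\ref{lemma:point-l2-estimation} on $\overline{I_j}$ (of length $d$) with $x_0=x_i$ for each $x_i\in I_j$, then averaging the resulting $N_j$ inequalities — which is legitimate because the left-hand side $\Vert u\Vert_{L^2(I_j)}^2$ does not depend on the chosen $x_i$ — yields
\begin{equation*}
  \Vert u\Vert_{L^2(I_j)}^2\le\frac{2d}{N_j}\sum_{i:\,x_i\in I_j}u^2(x_i)+2d^2\Vert u'\Vert_{L^2(I_j)}^2 .
\end{equation*}
Using $N_j\ge Nd\gamma_N$ to bound $d/N_j\le 1/(N\gamma_N)$ converts the first term into $\tfrac{2}{N\gamma_N}\sum_{i:\,x_i\in I_j}u^2(x_i)$.

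Finally I would sum over $j=p+1,\dots,q$: the derivative contributions recombine exactly to $2d^2\Vert u'\Vert_{L^2(I')}^2$, while the point contributions satisfy $\sum_{j=p+1}^{q}\sum_{i:\,x_i\in I_j}u^2(x_i)\le\sum_{i=1}^{N}u^2(x_i)$ because the $I_j\subseteq[0,1]$ are disjoint, so each observation point is counted at most once. Collecting these gives the stated bound. There is no deep obstacle here; the only things to get right are the averaging step and the fact that the replacement $d/N_j\le 1/(N\gamma_N)$ must invoke the lower bound on the histogram value $\rho_{N,j}$ — which is precisely what the hypothesis $\inf_{x\in I'}\rho_N(x)\ge\gamma_N$ supplies.
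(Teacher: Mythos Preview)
Your proposal is correct and follows essentially the same route as the paper's proof: localize to the subintervals $I_{p+1},\dots,I_q$, apply Lemma~\ref{lemma:point-l2-estimation} at each observation point in $I_j$, average, use $d/N_j\le 1/(N\gamma_N)$ from the histogram lower bound, and sum. You are slightly more explicit than the paper in noting that $N_j\ge 1$ (so the averaging step is legitimate) and in handling the null-set discrepancy between $I'$ and $\bigcup_j I_j$, but these are minor clarifications of the same argument.
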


\begin{proof}
Note that $I' = (pd, qd)$ is the interior of $I_{p+1} \cup \cdots \cup I_{q}$.
For arbitrary $x \in I_j$ with $p+1\leq j\leq q$, \red{the application of Lemma \ref{lemma:point-l2-estimation} yields}
\begin{equation*}
	\Vert u \Vert_{L^2(I_j)}^2 \leq 2 \Big( d u^2(x) + d^2 \Vert u' \Vert_{L^2(I_j)}^2 \Big).
\end{equation*}
Substitute $x = x_i$ and \red{add up} all observation points that \red{belong to} $I_j$, it follows that,
\begin{equation}\label{eq:histogram-inf-tmp-1}
	\Vert u \Vert_{L^2(I_j)}^2 \leq 2 \Big( N_j^{-1} d \sum_{i=1}^{N} \mathbbm{1}_{x_i \in I_j} \cdot u^2(x_i) + d^2 \Vert u' \Vert_{L^2(I_j)}^2 \Big),
\end{equation}
where $\mathbbm{1}$ is characteristic function. \red{Referring to the definition} of indicator function $\rho_N(x)$ in \eqref{eq:rho-n-definition}, if it \red{can be bounded below by} $\gamma_N$, then $N_j (N d)^{-1} \geq \gamma_N$. Hence,
\begin{equation*}
	N_j^{-1} d \leq \frac{1}{N \gamma_N},\quad p+1 \leq j \leq q.
\end{equation*}
\red{Putting the above estimate} into \eqref{eq:histogram-inf-tmp-1}, it follows that,
\begin{equation*}
    \Vert u \Vert_{L^2(I_j)}^2 \leq 2 \Big( \frac{1}{N \gamma_N} \sum_{i=1}^{N} \mathbbm{1}_{x_i \in I_j} \cdot u^2(x_i) + d^2 \Vert u' \Vert_{L^2(I_j)}^2 \Big).
\end{equation*}
Finally, we sum up all the the above \red{estimates from $j=p+1$ to $j=q$}, \red{since the subintervals $I_j$ are disjoint}, we have
\begin{equation*}
	\Vert u \Vert_{L^2(I')}^2 \leq 2 \Big( \frac{1}{N \gamma_N} \sum_{i=1}^{N} u^2(x_i) + d^2 \Vert u' \Vert_{L^2(I')}^2 \Big).
\end{equation*}
\end{proof}

\begin{lemma}\label{lemma:histogram-sup}
	Suppose \red{the indicator function on $[0,1]$ has an upper bound}
	\begin{equation*}
		\sup_{x \in [0, 1]} \rho_N(x) \leq \beta_N.
	\end{equation*}
	Then, for $u(x) \in C^1[0, 1]$, \blue{its mean squared value at observation points can be estimated as}
	\begin{equation*}
		\frac{1}{N} \sum_{i=1}^{N} u^2(x_i) \leq 2 \beta_N \Big( \Vert u \Vert_{L^2(0, 1)}^2 + d^2 \Vert u' \Vert_{L^2(0, 1)}^2 \Big).
	\end{equation*}
\end{lemma}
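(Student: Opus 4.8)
The plan is to mirror the argument of Lemma \ref{lemma:histogram-inf}, but using the \emph{second} inequality of Lemma \ref{lemma:point-l2-estimation} instead of the first, and replacing the histogram lower bound $\gamma_N$ by the upper bound $\beta_N$. Concretely, I would fix a subinterval $I_j$ with $1 \le j \le M$, which has length $d$, and for each observation point $x_i \in I_j$ apply Lemma \ref{lemma:point-l2-estimation} (the second estimate, with $[a,b]=I_j$ and $x_0=x_i$), obtaining
\begin{equation*}
	d\, u^2(x_i) \le 2 \Vert u \Vert_{L^2(I_j)}^2 + 2 d^2 \Vert u' \Vert_{L^2(I_j)}^2 .
\end{equation*}
Summing over the $N_j$ observation points that lie in $I_j$, the right-hand side acquires a factor $N_j$:
\begin{equation*}
	d \sum_{i:\, x_i \in I_j} u^2(x_i) \le N_j \Big( 2 \Vert u \Vert_{L^2(I_j)}^2 + 2 d^2 \Vert u' \Vert_{L^2(I_j)}^2 \Big) .
\end{equation*}

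Next I would invoke the hypothesis: $\sup_{x\in[0,1]} \rho_N(x) \le \beta_N$ forces $N_j / (Nd) \le \beta_N$ for every $j$, i.e.\ $N_j \le \beta_N N d$. Substituting this bound and cancelling one factor of $d$ gives
\begin{equation*}
	\sum_{i:\, x_i \in I_j} u^2(x_i) \le 2 \beta_N N \Big( \Vert u \Vert_{L^2(I_j)}^2 + d^2 \Vert u' \Vert_{L^2(I_j)}^2 \Big) .
\end{equation*}
Finally, I would sum over $j = 1, \dots, M$; since the subintervals $I_j$ partition $[0,1]$, the local $L^2$ norms reassemble into the global ones, and $\sum_j N_j = N$ accounts for all observation points, so after dividing through by $N$ one arrives at the claimed inequality.

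I do not expect any genuine obstacle: the proof is essentially the summation step at the end of Lemma \ref{lemma:histogram-inf} run with the reverse pointwise estimate. The only place requiring mild care is the bookkeeping — tracking the factors of $d$, $N_j$ and $N$ so that they combine correctly, and using the disjointness and covering property of $\{I_j\}$ to recover $\Vert u \Vert_{L^2(0,1)}^2$ and $\Vert u' \Vert_{L^2(0,1)}^2$ from the per-interval contributions.
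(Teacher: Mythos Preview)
Your proposal is correct and follows essentially the same argument as the paper: apply the second estimate of Lemma \ref{lemma:point-l2-estimation} on each $I_j$ at every observation point it contains, use $N_j \le \beta_N N d$, then sum over $j$ and divide by $N$. The only cosmetic difference is that the paper first divides the pointwise bound by $d$ (writing $u^2(x_i) \le \tfrac{2}{d}\Vert u\Vert_{L^2(I_j)}^2 + 2d\Vert u'\Vert_{L^2(I_j)}^2$) before summing, which is equivalent to your version.
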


\begin{proof}
Note that $[0, 1] = I_1 \cup \cdots \cup I_M$. For arbitrary $x \in I_j$ with $1\leq j\leq M$, \red{the application of Lemma \ref{lemma:point-l2-estimation} yields}
\begin{equation*}
	u^2(x) \leq \frac{2}{d} \Vert u \Vert_{L^2(I_j)}^2 + 2 d \Vert u' \Vert_{L^2(I_j)}^2.
\end{equation*}
\red{Substitute $x = x_i$ and add up} all observation points belong to $I_j$, \blue{it follows that}
\begin{equation}\label{eq:histogram-sup-tmp-1}
	\sum_{i=1}^{N} \mathbbm{1}_{x_i \in I_j} \cdot u^2(x_i) \leq \frac{2 N_j}{d} \Vert u \Vert_{L^2(I_j)}^2 + 2 N_j d \Vert u' \Vert_{L^2(I_j)}^2.
\end{equation}
Referring to the upper bound $\beta_N$ of the indicator function $\rho_N(x)$, it is obvious that $N_j (N d)^{-1} \leq \beta_N$ and consequently $N_j/d\leq N\beta_N$. Hence,
\begin{equation*}
	\sum_{i=1}^{N} \mathbbm{1}_{x_i \in I_j} \cdot u^2(x_i) \leq 2 N \beta_N \Vert u \Vert_{L^2(I_j)}^2 + 2 N \beta_N d^2 \Vert u' \Vert_{L^2(I_j)}^2.
\end{equation*}
Finally, we sum up all the the above \red{estimates from $j=1$ to $j=M$}, since the subintervals $I_j$ are \blue{disjoint}, we have
\begin{equation*}
	\frac{1}{N} \sum_{i=1}^{N} u^2(x_i) \leq 2 \beta_N \Vert u \Vert_{L^2(0, 1)}^2 + 2 \beta_N d^2 \Vert u' \Vert_{L^2(0, 1)}^2.
\end{equation*}
\end{proof}

\red{The last preliminary lemma provides} \blue{an estimate of} $L^2$ norm by function values at observation points and second order derivative. \red{We are willing to} replace $\Vert u' \Vert_{L^2(I')}^2$ on the right hand side of estimate in Lemma \ref{lemma:histogram-inf} by $\Vert u'' \Vert_{L^2(I')}^2$. \red{To this end,} the Sobolev inequality \eqref{eq:sobolev-interpolation} should be introduced and be utilized.

\begin{lemma}[Sobolev interpolation inequality {\cite[Theorem~5.2]{Adams.2003}}]
	Suppose that $I_s = (a, b)$.
	For arbitrary $u \in W^{2, 2}(I_s)$ and $\epsilon_0 > 0$, there exists \blue{a Sobolev constant $K = K(\epsilon_0, \vert I_s \vert)$}, such that for arbitrary $\epsilon \in (0, \epsilon_0]$, the $L^2$ norm of $u'$ can be estimated as
	\begin{equation}\label{eq:sobolev-interpolation}
		\Vert u' \Vert_{L^2(I_s)}^2 \leq K \Big( \epsilon^{-2} \Vert u \Vert_{L^2(I_s)}^2 + \epsilon^2 \Vert u'' \Vert_{L^2(I_s)}^2 \Big).
	\end{equation}
	\blue{In particular, if $\epsilon_0 = \vert I_s \vert = b-a$, the constant $K = K(\epsilon_0, \vert I_s \vert)$ can be replaced by}
	\begin{equation*}
		K_\ast = 32.
	\end{equation*}
\end{lemma}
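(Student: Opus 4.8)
The estimate \eqref{eq:sobolev-interpolation} with \emph{some} constant $K=K(\epsilon_0,\vert I_s\vert)$ is exactly \cite[Theorem~5.2]{Adams.2003}, so the only point that genuinely needs an argument is that, when $\epsilon_0=\vert I_s\vert=:\ell$, one may take the universal value $K_\ast=32$. My plan is to prove first the borderline case $\epsilon=\ell$ with explicit constants, and then reduce every $\epsilon\in(0,\ell]$ to it by subdividing $I_s$ into pieces of length comparable to $\epsilon$.

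For the case $\epsilon=\ell=b-a$ I would split $u'$ into its mean $\overline{u'}=\ell^{-1}\int_a^b u'$ and the fluctuation $w=u'-\overline{u'}$, so that $\Vert u'\Vert_{L^2(I_s)}^2=\ell\,\overline{u'}^{2}+\Vert w\Vert_{L^2(I_s)}^2$. The fluctuation is handled by Wirtinger's inequality, $\Vert w\Vert_{L^2(I_s)}^2\le\pi^{-2}\ell^2\Vert u''\Vert_{L^2(I_s)}^2$, and, since the primitive $W(x)=\int_a^x w$ vanishes at both endpoints, the Poincaré inequality gives $\Vert W\Vert_{L^2(I_s)}^2\le\pi^{-4}\ell^4\Vert u''\Vert_{L^2(I_s)}^2$. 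To control the mean I would use $u(x)=u(a)+\overline{u'}(x-a)+W(x)$ and test against the weight $x-\tfrac{a+b}{2}$, which has zero average (so the constant $u(a)$ drops out) and satisfies $\int_a^b(x-a)(x-\tfrac{a+b}{2})\,dx=\tfrac{\ell^3}{12}$; this produces
\begin{equation*}
	\overline{u'}=\frac{12}{\ell^{3}}\int_a^b\bigl(u(x)-W(x)\bigr)\Bigl(x-\frac{a+b}{2}\Bigr)\,dx ,
\end{equation*}
and then Cauchy--Schwarz (with $\int_a^b(x-\tfrac{a+b}{2})^2\,dx=\tfrac{\ell^3}{12}$) together with the Poincaré bound on $\Vert W\Vert$ gives a base estimate of the form $\Vert u'\Vert_{L^2(I_s)}^2\le C_1\ell^{-2}\Vert u\Vert_{L^2(I_s)}^2+C_2\ell^{2}\Vert u''\Vert_{L^2(I_s)}^2$ with $C_1=24$ and $C_2=\pi^{-2}+24\pi^{-4}<1$.

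For general $\epsilon\in(0,\ell]$ I would partition $I_s$ into $n=\lfloor\ell/\epsilon\rfloor\ge1$ equal subintervals $J_1,\dots,J_n$ of length $h=\ell/n$; since $n\le\ell/\epsilon<n+1\le2n$ one has $\epsilon\le h<2\epsilon$. Applying the base estimate on each $J_k$ (with $\ell$ replaced by $h$), using $h^{-2}\le\epsilon^{-2}$ and $h^{2}<4\epsilon^{2}$, and summing over the disjoint cover $\{J_k\}$ yields
\begin{equation*}
	\Vert u'\Vert_{L^2(I_s)}^2\le C_1\,\epsilon^{-2}\Vert u\Vert_{L^2(I_s)}^2+4C_2\,\epsilon^{2}\Vert u''\Vert_{L^2(I_s)}^2\le\max\{C_1,4C_2\}\Bigl(\epsilon^{-2}\Vert u\Vert_{L^2(I_s)}^2+\epsilon^{2}\Vert u''\Vert_{L^2(I_s)}^2\Bigr),
\end{equation*}
and $\max\{C_1,4C_2\}=24\le32$. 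No density step is needed, as everything above is already valid for $u'\in W^{1,2}$.

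The inequality itself is soft; the whole difficulty is the explicit constant. A naive proof — cutting $I_s$ into three equal thirds, locating via the mean value theorem a point where $u'$ equals a difference quotient, and propagating through $u'(x)=u'(\tau)+\int_\tau^x u''$ — already gives a valid constant, but one of order $10^2$. Pushing it below $32$ forces one to be economical everywhere: to use the \emph{sharp} Wirtinger/Poincaré constant $\ell/\pi$ in place of an elementary one, to recover $\overline{u'}$ from the optimal first-moment test function rather than from $u(b)-u(a)$ (which cannot be bounded by $\Vert u\Vert_{L^2}$ at all), and to tolerate only the unavoidable factor $4$ lost when $I_s$ is chopped into near-$\epsilon$-length pieces. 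Keeping track of these constants is the main, and essentially the only, obstacle.
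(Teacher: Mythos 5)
Your proposal is correct, and it is worth noting that the paper itself offers no proof of this lemma at all: the general estimate is simply cited from Adams, and the explicit value $K_\ast=32$ for the case $\epsilon_0=\vert I_s\vert$ is asserted without derivation. Your argument therefore supplies something the paper leaves implicit. I checked the constants: the orthogonal split $\Vert u'\Vert^2=\ell\,\overline{u'}^2+\Vert w\Vert^2$, the sharp Wirtinger bound $\Vert w\Vert^2\le\pi^{-2}\ell^2\Vert u''\Vert^2$ for the zero-mean fluctuation, the Dirichlet--Poincar\'e bound $\Vert W\Vert^2\le\pi^{-4}\ell^4\Vert u''\Vert^2$ (valid because $W$ vanishes at both endpoints precisely since $w$ has zero mean), and the first-moment identity $\overline{u'}=\tfrac{12}{\ell^3}\int_a^b(u-W)\bigl(x-\tfrac{a+b}{2}\bigr)\,dx$ with $\int_a^b\bigl(x-\tfrac{a+b}{2}\bigr)^2dx=\tfrac{\ell^3}{12}$ all hold, giving $C_1=24$ and $C_2=\pi^{-2}+24\pi^{-4}<1$ as you claim. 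The subdivision step is also sound: with $n=\lfloor\ell/\epsilon\rfloor$ and $h=\ell/n$ one indeed has $\epsilon\le h<2\epsilon$, so summing the base estimate over the disjoint pieces costs only a factor $4$ on the $\Vert u''\Vert^2$ term, and $\max\{24,\,4C_2\}=24\le 32$, so you in fact obtain a slightly better constant than the one stated. The key insight you correctly identify is that $\overline{u'}$ must be recovered from a zero-average test function (so that the endpoint value $u(a)$, which is not controlled by $\Vert u\Vert_{L^2}$, drops out) rather than from $u(b)-u(a)$; this is exactly the point where a naive argument loses the explicit constant. No gaps.
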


\begin{lemma}\label{lemma:estimation-longer-interval}
For \red{nonnegative} integers $p, q$ with $0 \leq p < q \leq M$ \blue{and $q - p \geq 2 \sqrt{K_\ast}$}, let subinterval $I' := (pd, qd)$. 
Suppose the indicator function is bounded below on $I'$ by
\begin{equation*}
	\inf_{x \in I'} \rho_N(x) \geq \gamma_N > 0,
\end{equation*}
Then for $u(x) \in W^{2, 2}(0, 1)$, \blue{its $L^2$ norm on $I'$ can be estimated as}
	\begin{equation}\label{eq:estimation-longer-interval}
	\Vert u \Vert_{L^2(I')}^2 \leq \frac{4}{N \gamma_N} \sum_{i=1}^{N} u^2(x_i) + 16 K_\ast^2 d^4 \Vert u'' \Vert_{L^2(I')}^2.
	\end{equation}
\end{lemma}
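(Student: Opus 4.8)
The plan is to combine Lemma~\ref{lemma:histogram-inf} with the Sobolev interpolation inequality \eqref{eq:sobolev-interpolation}, using an absorption argument to trade the troublesome $\Vert u' \Vert_{L^2(I')}^2$ term for a $\Vert u'' \Vert_{L^2(I')}^2$ term plus a (small) multiple of $\Vert u \Vert_{L^2(I')}^2$. First I note that in one dimension $W^{2,2}(0,1)$ embeds into $C^1[0,1]$, so the restriction of $u$ to $I'$ lies in $C^1[0,1] \cap W^{2,2}(I')$ and both Lemma~\ref{lemma:histogram-inf} and \eqref{eq:sobolev-interpolation} apply on $I'$.

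Starting from Lemma~\ref{lemma:histogram-inf},
\begin{equation*}
	\Vert u \Vert_{L^2(I')}^2 \leq 2 \Big( \frac{1}{N \gamma_N} \sum_{i=1}^{N} u^2(x_i) + d^2 \Vert u' \Vert_{L^2(I')}^2 \Big).
\end{equation*}
Apply \eqref{eq:sobolev-interpolation} on the interval $I'$ with $\epsilon_0 = \vert I' \vert = (q-p)d$, for which the Sobolev constant may be taken to be $K_\ast = 32$; this gives, for every admissible $\epsilon \in (0, (q-p)d]$,
\begin{equation*}
	\Vert u' \Vert_{L^2(I')}^2 \leq K_\ast \big( \epsilon^{-2} \Vert u \Vert_{L^2(I')}^2 + \epsilon^2 \Vert u'' \Vert_{L^2(I')}^2 \big).
\end{equation*}
Substituting and choosing $\epsilon = 2d\sqrt{K_\ast}$ — which is legitimate precisely because the hypothesis $q-p \geq 2\sqrt{K_\ast}$ guarantees $2d\sqrt{K_\ast} \leq (q-p)d$ — the coefficient of $\Vert u \Vert_{L^2(I')}^2$ on the right becomes $2 d^2 K_\ast \epsilon^{-2} = \tfrac12$, while the $\Vert u'' \Vert^2$ coefficient becomes $2 d^2 K_\ast \epsilon^2 = 8 K_\ast^2 d^4$. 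Thus
\begin{equation*}
	\Vert u \Vert_{L^2(I')}^2 \leq \frac{2}{N\gamma_N}\sum_{i=1}^{N} u^2(x_i) + \frac12 \Vert u \Vert_{L^2(I')}^2 + 8 K_\ast^2 d^4 \Vert u'' \Vert_{L^2(I')}^2.
\end{equation*}
Absorbing $\tfrac12 \Vert u \Vert_{L^2(I')}^2$ into the left-hand side (finite since $u \in W^{2,2}$) and multiplying by $2$ yields exactly \eqref{eq:estimation-longer-interval}.

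The only genuinely delicate point is the coupling of the two constraints on $\epsilon$: it must be large enough ($\epsilon \geq 2d\sqrt{K_\ast}$) for the absorption to work, yet small enough ($\epsilon \leq \vert I'\vert$) for the sharp Sobolev constant $K_\ast$ to be available. These two requirements are simultaneously satisfiable exactly when $q - p \geq 2\sqrt{K_\ast}$, which is why that assumption appears in the statement; everything else is bookkeeping of constants.
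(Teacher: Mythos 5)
Your proposal is correct and follows essentially the same route as the paper's proof: combine Lemma~\ref{lemma:histogram-inf} with the Sobolev interpolation inequality on $I'$, choose $\epsilon = 2\sqrt{K_\ast}\, d$ (admissible precisely because $q-p \geq 2\sqrt{K_\ast}$), and absorb the resulting $\tfrac12\Vert u\Vert_{L^2(I')}^2$ term into the left-hand side. The constants you obtain match \eqref{eq:estimation-longer-interval} exactly, and your added observation that $W^{2,2}$ embeds into $C^1$ in one dimension is a correct (implicit in the paper) justification for applying Lemma~\ref{lemma:histogram-inf}.
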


\begin{proof}
\blue{We apply the Sobolev interpolation inequality \eqref{eq:sobolev-interpolation} with $I_s = I'$ and $\epsilon_0 = \vert I' \vert=(q-p)d$}. For $\epsilon \in (0, \epsilon_0]$, it follows that,
\begin{equation*}
\begin{split}
	\Vert u \Vert_{L^2(I')}^2 &\leq 2 \Big( \frac{1}{N \gamma_N} \sum_{i=1}^{N} u^2(x_i) + d^2 \Vert u' \Vert_{L^2(I')}^2 \Big) \\
	&\leq 2 \Big( \frac{1}{N \gamma_N} \sum_{i=1}^{N} u^2(x_i) + d^2 K_\ast \big( \epsilon^{-2} \Vert u \Vert_{L^2(I')}^2 + \epsilon^2 \Vert u'' \Vert_{L^2(I')}^2 \big) \Big) \\
	&= \frac{2}{N \gamma_N} \sum_{i=1}^{N} u^2(x_i) + 2 K_\ast \Big( \frac{d}{\epsilon} \Big)^2 \Vert u \Vert_{L^2(I')}^2 + 2 K_\ast d^2 \epsilon^2 \Vert u'' \Vert_{L^2(I')}^2.
\end{split}
\end{equation*}
For particular
\begin{equation*}
	\epsilon = 2 K_\ast^{1/2} d \leq \epsilon_0,
\end{equation*}
the coefficient $2 K_\ast (d/\epsilon)^2$ \red{on the right hand side} satisfies
\begin{equation*}
	2 K_\ast \Big( \frac{d}{\epsilon} \Big)^2 \leq \frac{1}{2}.
\end{equation*}
Therefore,
\begin{equation*}
	\Vert u \Vert_{L^2(I')}^2 \leq \frac{4}{N \gamma_N} \sum_{i=1}^{N} u^2(x_i) + 16 K_\ast^2 d^4 \Vert u'' \Vert_{L^2(I')}^2.
\end{equation*}
\end{proof}

Let
\begin{align*}
e_N(x) = f_N(x)-f(x)
\end{align*}
be the error function of the proposed regularization algorithm, \blue{and
\begin{equation*}
    \bm{\eta}_N = (\eta_1, \eta_2, \cdots, \eta_N)^\top \in \mathbb{R}^{N}
\end{equation*}
be the vector of random noise}. \red{The error analysis can be discussed separately by introducing the deterministic part
\begin{align*}
		&f_{N, 1} = \argmin_{g \in V_M} J(g; \alpha_N, \bm{y}_N - \bm{\eta}_N) = \Phi[\bm{\lambda}_{N, 1}],\\
        &e_{N, 1} = f_{N, 1} - f,
	\end{align*}
and the random part
\begin{equation*}
		f_{N, 2} = \argmin_{g \in V_M} J(g; \alpha_N, \bm{\eta}_N) = \Phi[\bm{\lambda}_{N, 2}].
\end{equation*}}
Thanks to the linearity of Tikhonov regularization, we have
$$f_N = f_{N, 1} + f_{N, 2},\quad\textrm{and} \ e_N = e_{N, 1} + f_{N, 2}.$$

Referring to Lemma \ref{lemma:estimation-longer-interval}, in order to discuss the $L^2$ norms of $e_{N, 1}$ and $f_{N, 2}$ respectively, we will estimate their mean squared errors at observation points and $L^2$ norms of second order derivatives. It is necessary to provide the following lemma.
\begin{lemma}[{\cite[Theorem~1.55]{Micula.1999}}]\label{lemma:cubic-spline-interpolation-error}
	Suppose that $f(x) \in W^{2, 2}(0, 1)$, let $s_{f, M} \in V_M$ be the natural cubic spline interpolant of $f$ with knots $\{p_j\}_{j=0}^{M}$.
	Then, the $L^2$ norms of $f''$ and $s''_{f, M}$ satisfy the following \blue{equality}
	\begin{equation}\label{eq:cubic-spline-pythagorean-equality}
		\Vert s''_{f, M} \Vert_{L^2(0, 1)}^2 + \Vert s''_{f, M} - f'' \Vert_{L^2(0, 1)}^2 = \Vert f'' \Vert_{L^2(0, 1)}^2.
	\end{equation}
	The interpolation errors of $s_{f, M} - f$ and $s'_{f, M} - f'$ \blue{can be estimated as}
	\begin{equation}\label{eq:cubic-spline-error-estimate}
		\begin{split}
		\Vert s_{f, M} - f \Vert_{L^2(0, 1)}^2 &\leq \frac{d^4}{16} \Vert s''_{f, M} - f'' \Vert_{L^2(0, 1)}^2 \leq \frac{d^4}{16} \Vert f'' \Vert_{L^2(0, 1)}^2, \\
		\Vert s'_{f, M} - f' \Vert_{L^2(0, 1)}^2 &\leq \frac{d^2}{2} \Vert s''_{f, M} - f'' \Vert_{L^2(0, 1)}^2 \leq \frac{d^2}{2} \Vert f'' \Vert_{L^2(0, 1)}^2.
		\end{split}
	\end{equation}
\end{lemma}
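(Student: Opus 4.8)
This statement is the classical first integral relation (minimum‑curvature property) of natural cubic spline interpolation together with the attendant $L^2$ error bounds; it is quoted from \cite[Theorem~1.55]{Micula.1999}, but here is how I would prove it directly. Set $e:=s_{f,M}-f$, so that $e\in W^{2,2}(0,1)$ and $e(p_j)=0$ for $j=0,\dots,M$. Since $f''=s''_{f,M}-e''$, to obtain \eqref{eq:cubic-spline-pythagorean-equality} it suffices to show that the cross term $\int_0^1 s''_{f,M}e''\,dx$ vanishes, for then
\[
\Vert f''\Vert_{L^2(0,1)}^2=\Vert s''_{f,M}\Vert_{L^2(0,1)}^2+\Vert e''\Vert_{L^2(0,1)}^2 ,
\]
and $\Vert e''\Vert_{L^2(0,1)}=\Vert s''_{f,M}-f''\Vert_{L^2(0,1)}$. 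To kill the cross term I would integrate by parts on each subinterval $(p_{j-1},p_j)$: the spline is $C^2$ and piecewise cubic, so $s''_{f,M}$ and $e'$ are continuous on $[0,1]$ (hence the interior boundary terms telescope) while $s'''_{f,M}$ equals a constant $c_j$ on $(p_{j-1},p_j)$, which gives
\[
\int_0^1 s''_{f,M}e''\,dx=\big[s''_{f,M}e'\big]_0^1-\sum_{j=1}^{M}c_j\int_{p_{j-1}}^{p_j}e'\,dx=\big[s''_{f,M}e'\big]_0^1-\sum_{j=1}^{M}c_j\big(e(p_j)-e(p_{j-1})\big).
\]
The sum is zero because $e$ vanishes at every knot, and the boundary term is zero by the natural conditions $s''_{f,M}(0)=s''_{f,M}(1)=0$; this yields \eqref{eq:cubic-spline-pythagorean-equality}.

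For the error estimates \eqref{eq:cubic-spline-error-estimate} I would argue on each subinterval $I_j':=(p_{j-1},p_j)$, of length $d$, where $e$ vanishes at both endpoints. Then $\int_{I_j'}e'=e(p_j)-e(p_{j-1})=0$, so by Rolle's theorem $e'(\xi_j)=0$ for some $\xi_j\in I_j'$; writing $e'(x)=\int_{\xi_j}^x e''$ and applying Cauchy--Schwarz gives
\[
\Vert e'\Vert_{L^2(I_j')}^2\le\Big(\int_{I_j'}|x-\xi_j|\,dx\Big)\Vert e''\Vert_{L^2(I_j')}^2\le\tfrac{d^2}{2}\Vert e''\Vert_{L^2(I_j')}^2 .
\]
Since moreover $e$ vanishes at both ends of $I_j'$, the sharp one‑dimensional Friedrichs inequality (Fourier sine expansion on $I_j'$, equivalently the Green's function of $-\partial_{xx}$ with homogeneous Dirichlet data) gives $\Vert e\Vert_{L^2(I_j')}^2\le(d/\pi)^2\Vert e'\Vert_{L^2(I_j')}^2\le\tfrac{d^4}{2\pi^2}\Vert e''\Vert_{L^2(I_j')}^2\le\tfrac{d^4}{16}\Vert e''\Vert_{L^2(I_j')}^2$. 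Summing over $j=1,\dots,M$ (the $I_j'$ are pairwise disjoint with union of full measure in $(0,1)$) yields $\Vert s_{f,M}-f\Vert_{L^2(0,1)}^2\le\tfrac{d^4}{16}\Vert e''\Vert_{L^2(0,1)}^2$ and $\Vert s'_{f,M}-f'\Vert_{L^2(0,1)}^2\le\tfrac{d^2}{2}\Vert e''\Vert_{L^2(0,1)}^2$; the rightmost inequalities in \eqref{eq:cubic-spline-error-estimate} then follow because \eqref{eq:cubic-spline-pythagorean-equality} forces $\Vert e''\Vert_{L^2(0,1)}=\Vert s''_{f,M}-f''\Vert_{L^2(0,1)}\le\Vert f''\Vert_{L^2(0,1)}$.

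The only points needing care are (i) justifying the integration by parts across the knots, which is handled by splitting at each $p_j$, where $s''_{f,M}$ is $C^1$ and $s'''_{f,M}$ is constant, so that only the global boundary term (killed by the natural conditions) and the knot‑differences of $e$ (all zero) survive; and (ii) landing exactly on the constants $\tfrac{d^4}{16}$ and $\tfrac{d^2}{2}$ — the elementary ``distance to the Rolle point'' bound already gives $\tfrac{d^2}{2}$ for $e'$ with room to spare, while for $e$ itself one should invoke the sharp Poincaré/Friedrichs constant rather than composing two crude Cauchy--Schwarz estimates, which would only yield $\tfrac{d^4}{8}$. Since the statement is precisely \cite[Theorem~1.55]{Micula.1999}, in the paper one may of course simply cite it.
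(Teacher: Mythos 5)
The paper does not prove this lemma at all --- it is imported verbatim as \cite[Theorem~1.55]{Micula.1999} --- so there is no in-paper argument to compare against; what you have written is a self-contained proof of the cited result, and it is correct. The first integral relation is established the standard way (piecewise integration by parts, using that $s'''_{f,M}$ is constant between knots, that $e$ vanishes at every knot, and that the natural conditions $s''_{f,M}(0)=s''_{f,M}(1)=0$ kill the global boundary term), and the error bounds follow from the Rolle-point estimate $\Vert e'\Vert_{L^2(I_j')}^2\le\tfrac{d^2}{2}\Vert e''\Vert_{L^2(I_j')}^2$ on each cell. You were right to flag the constant $\tfrac{d^4}{16}$ as the only delicate point: chaining two crude fundamental-theorem estimates does not reach it, whereas the sharp Dirichlet--Poincar\'e constant $(d/\pi)^2$ gives $\tfrac{d^4}{2\pi^2}\le\tfrac{d^4}{16}$ since $2\pi^2>16$. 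The final inequalities $\Vert e''\Vert\le\Vert f''\Vert$ indeed come for free from the orthogonality relation. In the context of the paper a citation suffices, but your derivation is a legitimate and complete substitute.
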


\begin{lemma}
	Suppose that $f(x) \in W^{2, 2}(0, 1)$, and \blue{the indicator function is bounded above by}
	\begin{equation*}
		\sup_{x \in [0, 1]} \rho_N(x) \leq \beta_N.
	\end{equation*}
	\blue{Then, the mean squared value of $e_{N, 1}$ and the $L^2$ norm of $e''_{N, 1}$ can be estimated as}
	\begin{equation*}
		\begin{split}
		\frac{1}{N} \sum_{i=1}^{N} e_{N, 1}^2(x_i) &\leq \frac{9}{8} \beta_N d^4 \Vert f'' \Vert_{L^2(0, 1)}^2 + \alpha_N \Vert f'' \Vert_{L^2(0, 1)}^2, \\
		\Vert e_{N, 1}'' \Vert_{L^2(0, 1)}^2 &\leq \frac{9}{4} \beta_N \Vert f'' \Vert_{L^2(0, 1)}^2 \cdot \frac{d^4}{\alpha_N} + 4 \Vert f'' \Vert_{L^2(0, 1)}^2.
		\end{split}
	\end{equation*}
\end{lemma}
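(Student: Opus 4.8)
The plan is to exploit the variational characterization of $f_{N,1}$ as the minimizer over $V_M$ of the Tikhonov functional fed with the noise-free data $\bm{y}_N - \bm{\eta}_N = (f(x_1), \dots, f(x_N))^\top$, and to compare it against the natural cubic spline interpolant $s_{f,M} \in V_M$ of Lemma \ref{lemma:cubic-spline-interpolation-error}. Since $s_{f,M}$ lies in $V_M$, minimality gives
\begin{equation*}
	\frac{1}{N}\sum_{i=1}^{N} e_{N,1}^2(x_i) + \alpha_N \Vert f_{N,1}'' \Vert_{L^2(0,1)}^2 = J(f_{N,1}; \alpha_N, \bm{y}_N - \bm{\eta}_N) \leq J(s_{f,M}; \alpha_N, \bm{y}_N - \bm{\eta}_N),
\end{equation*}
where I have used $e_{N,1}(x_i) = f_{N,1}(x_i) - f(x_i)$ and the fact that $y_i - \eta_i = f(x_i)$. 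Both terms on the left are nonnegative, so it suffices to bound the right-hand side by $\tfrac{9}{8}\beta_N d^4 \Vert f'' \Vert_{L^2(0,1)}^2 + \alpha_N \Vert f'' \Vert_{L^2(0,1)}^2$; the first inequality of the lemma then follows by discarding the penalty term, and the second by discarding the data term.

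Next I would estimate the two pieces of $J(s_{f,M}; \alpha_N, \bm{y}_N - \bm{\eta}_N)$ separately. For the data-fidelity term $\frac{1}{N}\sum_i (s_{f,M}(x_i) - f(x_i))^2$, apply Lemma \ref{lemma:histogram-sup} with $u = s_{f,M} - f$ (which is $C^1[0,1]$, since $f \in W^{2,2}(0,1) \hookrightarrow C^1[0,1]$ in one dimension and $s_{f,M}$ is a cubic spline) to pass from point values to $L^2$ norms, then insert the interpolation bounds $\Vert s_{f,M} - f \Vert_{L^2(0,1)}^2 \leq \tfrac{d^4}{16}\Vert f'' \Vert_{L^2(0,1)}^2$ and $\Vert s_{f,M}' - f' \Vert_{L^2(0,1)}^2 \leq \tfrac{d^2}{2}\Vert f'' \Vert_{L^2(0,1)}^2$ from \eqref{eq:cubic-spline-error-estimate}; the resulting constant is $2\beta_N\big(\tfrac{1}{16} + \tfrac{1}{2}\big)d^4 = \tfrac{9}{8}\beta_N d^4$, exactly as claimed. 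For the penalty term, the Pythagorean identity \eqref{eq:cubic-spline-pythagorean-equality} immediately gives $\Vert s_{f,M}'' \Vert_{L^2(0,1)}^2 \leq \Vert f'' \Vert_{L^2(0,1)}^2$. Summing the two contributions yields the first estimate.

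For the second estimate I would return to the displayed inequality, keep only the penalty term on the left, and rearrange to get $\Vert f_{N,1}'' \Vert_{L^2(0,1)}^2 \leq \tfrac{9}{8}\beta_N \Vert f'' \Vert_{L^2(0,1)}^2 \tfrac{d^4}{\alpha_N} + \Vert f'' \Vert_{L^2(0,1)}^2$. Writing $e_{N,1}'' = f_{N,1}'' - f''$ and applying $\Vert a - b \Vert^2 \leq 2\Vert a\Vert^2 + 2\Vert b\Vert^2$ doubles both constants and, after combining the two $\Vert f'' \Vert_{L^2(0,1)}^2$ contributions, produces $\tfrac{9}{4}\beta_N \Vert f'' \Vert_{L^2(0,1)}^2 \tfrac{d^4}{\alpha_N} + 4\Vert f'' \Vert_{L^2(0,1)}^2$.

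There is no real obstacle in this argument: it is the standard "compare the minimizer with a good competitor" technique, and the only competitor needed is the spline interpolant $s_{f,M}$. The points requiring a little care are the regularity check that lets Lemma \ref{lemma:histogram-sup} apply to $s_{f,M}-f$, and the bookkeeping of the numerical constants so that the stated $\tfrac{9}{8}$, $\tfrac{9}{4}$, and $4$ come out precisely; both are routine.
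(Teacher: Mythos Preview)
Your proposal is correct and follows essentially the same approach as the paper: compare the minimizer $f_{N,1}$ against the natural cubic spline interpolant $s_{f,M}\in V_M$, bound the data-fidelity term via Lemma~\ref{lemma:histogram-sup} together with the interpolation estimates \eqref{eq:cubic-spline-error-estimate}, bound the penalty term via the Pythagorean identity \eqref{eq:cubic-spline-pythagorean-equality}, and then pass from $\Vert f_{N,1}''\Vert_{L^2}$ to $\Vert e_{N,1}''\Vert_{L^2}$ by the elementary inequality $\Vert a-b\Vert^2\le 2\Vert a\Vert^2+2\Vert b\Vert^2$. The constant bookkeeping matches the paper exactly.
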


\begin{proof}
	Denote that
	\begin{equation*}
		E_N = \sum_{i=1}^{N} \big( s_{f, M}(x_i) - f(x_i) \big)^2.
	\end{equation*}
	Referring to Lemma \ref{lemma:histogram-sup} and \eqref{eq:cubic-spline-error-estimate}, we have
	\begin{equation*}
		\begin{split}
		E_N &\leq 2 \beta_N \Big( \Vert f - s_{f, M} \Vert_{L^2(0, 1)}^2 + d^2 \Vert f' - f'_M \Vert_{L^2(0, 1)}^2 \Big) \\
		&\leq 2 \beta_N \Big( \frac{d^4}{16} \Vert f'' \Vert_{L^2(0, 1)}^2 + \frac{d^4}{2} \Vert f'' \Vert_{L^2(0, 1)}^2 \Big) \\
		&\leq \frac{9}{8} \beta_N d^4 \Vert f'' \Vert_{L^2(0, 1)}^2.
		\end{split}
	\end{equation*}

	Recalling that $f_{N, 1}$ minimizes the functional $J(\cdot; \alpha_N, \bm{y}_N - \bm{\eta}_N)$ in $V_M$, therefore,
	\begin{equation}\label{eq:truth-part-minimality-of-tikhonov}
		\frac{1}{N} \sum_{i=1}^{N} e_{N, 1}^2(x_i) + \alpha_N \Vert f_{N, 1}'' \Vert_{L^2(0, 1)}^2 \leq E_N + \alpha_N \Vert s_{f, M}'' \Vert_{L^2(0, 1)}^2.
	\end{equation}
	Combining the conclusion in \eqref{eq:cubic-spline-pythagorean-equality} yields that
	\begin{equation*}
		\frac{1}{N} \sum_{i=1}^{N} e_{N, 1}^2(x_i) \leq E_N + \alpha_N \Vert f'' \Vert_{L^2(0, 1)}^2 \leq \frac{9}{8} \beta_N d^4 \Vert f'' \Vert_{L^2(0, 1)}^2 + \alpha_N \Vert f'' \Vert_{L^2(0, 1)}^2.
	\end{equation*}
	On the other hand, \blue{divide both sides of \eqref{eq:truth-part-minimality-of-tikhonov} by $\alpha_N$, it follows that}
	\begin{equation*}
		\Vert f_{N, 1}'' \Vert_{L^2(0, 1)}^2 \leq \frac{E_N}{\alpha_N} + \Vert s_{f, M}'' \Vert_{L^2(0, 1)}^2.
	\end{equation*}
	Therefore,
	\begin{equation*}
		\begin{split}
		\Vert e_{N, 1}'' \Vert_{L^2(0, 1)}^2 &\leq 2 \Vert f_{N, 1}'' \Vert_{L^2(0, 1)}^2 + 2 \Vert f'' \Vert_{L^2(0, 1)}^2 \\
		&\leq 2 \frac{E_N}{\alpha_N} + 2 \Vert s_{f, M}'' \Vert_{L^2(0, 1)}^2 + 2 \Vert f'' \Vert_{L^2(0, 1)}^2 \\
		&\leq \frac{9}{4} \beta_N \Vert f'' \Vert_{L^2(0, 1)}^2 \cdot \frac{d^4}{\alpha_N} + 4 \Vert f'' \Vert_{L^2(0, 1)}^2.
		\end{split}
	\end{equation*}
\end{proof}

\begin{corollary}
	Suppose that $f(x) \in W^{2, 2}(0, 1)$, and \blue{the indicator function is bounded above by}
	\begin{equation*}
		\sup_{x \in [0, 1]} \rho_N(x) \leq \beta_N.
	\end{equation*}
	\red{Choosing the regularization parameter}
	\begin{equation*}
		\alpha_N = \frac{M \sigma^2}{N} + d^4,
	\end{equation*}
	\blue{the mean squared value of $e_{N, 1}$ and the $L^2$ norm of $e''_{N, 1}$ can be estimated as}
	\begin{equation}\label{eq:truth-part-mse-estimates}
		\begin{split}
		\frac{1}{N} \sum_{i=1}^{N} e_{N, 1}^2(x_i) &\leq \frac{M \sigma^2}{N} \Vert f'' \Vert_{L^2(0, 1)}^2 + d^4 \Big( \frac{9}{8} \beta_N + 1 \Big) \Vert f'' \Vert_{L^2(0, 1)}^2, \\
		\Vert e_{N, 1}'' \Vert_{L^2(0, 1)}^2 &\leq \Big( \frac{9}{4} \beta_N + 4 \Big) \Vert f'' \Vert_{L^2(0, 1)}^2.
		\end{split}
	\end{equation}
\end{corollary}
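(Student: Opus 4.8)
The plan is to obtain this Corollary as a direct specialization of the preceding Lemma: once the parameter is fixed at $\alpha_N = M\sigma^2/N + d^4$, both displayed inequalities follow by substitution, together with the single elementary observation that $\alpha_N \geq d^4$ and hence $d^4/\alpha_N \leq 1$. So the first thing I would do is simply quote the two bounds already established (valid for every $\alpha_N > 0$ under the hypothesis $\sup_{x\in[0,1]}\rho_N(x)\leq\beta_N$):
\begin{equation*}
	\frac1N\sum_{i=1}^N e_{N,1}^2(x_i) \leq \frac98\beta_N d^4\Vert f''\Vert_{L^2(0,1)}^2 + \alpha_N\Vert f''\Vert_{L^2(0,1)}^2, \qquad
	\Vert e_{N,1}''\Vert_{L^2(0,1)}^2 \leq \frac94\beta_N\Vert f''\Vert_{L^2(0,1)}^2\cdot\frac{d^4}{\alpha_N} + 4\Vert f''\Vert_{L^2(0,1)}^2.
\end{equation*}

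For the first estimate, I would substitute $\alpha_N = M\sigma^2/N + d^4$ into the term $\alpha_N\Vert f''\Vert_{L^2(0,1)}^2$, split it as $(M\sigma^2/N)\Vert f''\Vert_{L^2(0,1)}^2 + d^4\Vert f''\Vert_{L^2(0,1)}^2$, and group the two $d^4$-terms so that $\frac98\beta_N d^4 + d^4 = d^4(\frac98\beta_N + 1)$; this yields exactly the claimed bound. For the second estimate, since $\alpha_N = M\sigma^2/N + d^4 \geq d^4 > 0$, the ratio $d^4/\alpha_N$ is at most $1$, so the first term on the right is bounded by $\frac94\beta_N\Vert f''\Vert_{L^2(0,1)}^2$ and one reads off $\Vert e_{N,1}''\Vert_{L^2(0,1)}^2 \leq (\frac94\beta_N + 4)\Vert f''\Vert_{L^2(0,1)}^2$.

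I do not expect a genuine obstacle here, since all the analytic content — the minimality characterization of $f_{N,1}$ in $V_M$, the cubic-spline interpolation error bounds of Lemma~\ref{lemma:cubic-spline-interpolation-error}, the Pythagorean identity \eqref{eq:cubic-spline-pythagorean-equality}, and the histogram upper bound of Lemma~\ref{lemma:histogram-sup} — has already been expended in the previous lemma. The only point worth stating explicitly is the monotonicity inequality $d^4/\alpha_N\leq 1$; this is precisely where the additive $d^4$ in the parameter rule earns its keep, keeping the second-derivative bound uniformly controlled, while the summand $M\sigma^2/N$ in $\alpha_N$ is what makes the data-fidelity estimate of order $O(M\sigma^2/N + d^4)$. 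I would close by noting that this is the deterministic half of the error decomposition $e_N = e_{N,1} + f_{N,2}$, to be combined later with the stochastic estimate of $f_{N,2}$.
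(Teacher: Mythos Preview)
Your proposal is correct and matches the paper's approach: the paper states this Corollary without proof, treating it as an immediate consequence of the preceding Lemma via the substitution $\alpha_N = M\sigma^2/N + d^4$ together with the trivial bound $d^4/\alpha_N \leq 1$. Your write-up spells out exactly this substitution and term-grouping, which is all that is needed.
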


% we recall and introduce some useful notations.
%In Definition \ref{definition:determistic-random-parts}, we defined the random part as
%\begin{equation*}
%    f_{N, 2} = \argmin_{g \in V_M} J(g; \alpha_N, \bm{\eta}_N) = \Phi[\bm{\lambda}_{N, 2}],
%\end{equation*}
%where $\bm{\lambda}_{N, 2} \in \mathbb{R}^{M+3}$.
%By Theorem \ref{thm:unique-minimizer-of-tikhonov-functional-in-vm}, $\bm{\lambda}_{N, 2}$ is given by
%\begin{equation*}
%    \Big( \alpha_N P + \frac{1}{N} H_N^\top H_N \Big) \bm{\lambda}_{N, 2} = \frac{1}{N} H_N^\top \bm{\eta}_N,
%\end{equation*}
%that is,
%\begin{equation*}
%    \bm{\lambda}_{N, 2} = \big( N \alpha_N P + H_N^\top H_N \big)^{-1} H_N^\top \bm{\eta}_N.
%\end{equation*}
\blue{For the random part $f_{N, 2} = \Phi[\bm{\lambda}_{N, 2}]$, recalling Theorem \ref{thm:unique-minimizer-of-tikhonov-functional-in-vm}, $\bm{\lambda}_{N, 2}$ can be written as
\begin{equation}
    \bm{\lambda}_{N, 2} = \big( N \alpha_N P + H_N^\top H_N \big)^{-1} H_N^\top \bm{\eta}_N,
\end{equation}
where $\bm{\eta}_N \in \mathbb{R}^{N}$ is the vector of random noise.
}
What we need to estimate are the mean squared value of $f_{N, 2}$ at observation points
\begin{equation*}
    \frac{1}{N} \sum_{i=1}^{N} f_{N, 2}^2(x_i) = \frac{1}{N}  \big\Vert H_N \bm{\lambda}_{N, 2} \big\Vert_2^2,
\end{equation*}
and the $L^2$ norm of $f_{N, 2}$ on interval $(0, 1)$
\begin{equation*}
    \Vert f''_{N, 2} \Vert_{L^2(0, 1)}^2 = \bm{\lambda}_{N, 2}^\top P \bm{\lambda}_{N, 2}.
\end{equation*}

\red{A difficulty is} the matrix $P$ is positive semidefinite and not invertible.
\red{In order to solve} this problem, we disturb $P$ by identity matrix $\mathbb{I}$.
\blue{Let}
\begin{equation}\label{eq:definition-of-p-epsilon-and-lambda-epsilon-1d}
\begin{split}
    P_\epsilon &= P + \epsilon \mathbb{I}, \\
    \bm{\lambda}^{\epsilon}_{N, 2} &= ( N \alpha_N P_\epsilon +  H_N^\top H_N )^{-1} H_N^\top \bm{\eta}_N,
\end{split}
\end{equation}
\red{in which $\epsilon$ is a small nonnegative constant}, when $\epsilon = 0$, we have $P_\epsilon = P$, $\bm{\lambda}_{N, 2}^{\epsilon} = \bm{\lambda}_{N, 2}$. \red{Our idea is to }prove the desired results \red{provided that} $\epsilon > 0$, then let $\epsilon \to 0^+$.

\red{Before the formal analysis, the following lemmas are necessary.}

\begin{lemma}[Woodbury matrix identity]
	Let $A \in \mathbb{R}^{n \times n}$ be an invertible matrix, $U \in \mathbb{R}^{n \times k}$, $C \in \mathbb{R}^{k \times k}$, $V \in \mathbb{R}^{k \times n}$.
	Then, there holds
	\begin{equation}\label{eq:woodbury-matrix-identity}
		(A + U C V)^{-1} = A^{-1} - A^{-1} U (C^{-1} + V A^{-1} U)^{-1} V A^{-1}.
	\end{equation}
\end{lemma}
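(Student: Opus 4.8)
The identity \eqref{eq:woodbury-matrix-identity} is purely algebraic, and the cleanest route is direct verification: I will show that the matrix on the right-hand side is a genuine inverse of $A + UCV$ by multiplying the two together and simplifying to $\mathbb{I}$. Note first that for the right-hand side to be well defined one must implicitly assume that $C$ is invertible and that $C^{-1} + VA^{-1}U \in \mathbb{R}^{k \times k}$ is invertible; I would state this at the outset. In the application \eqref{eq:definition-of-p-epsilon-and-lambda-epsilon-1d} the matrices in question are symmetric positive definite, so invertibility is automatic there.

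Abbreviate $S = C^{-1} + VA^{-1}U$. The plan is to expand
\begin{equation*}
	(A + UCV)\bigl(A^{-1} - A^{-1} U S^{-1} V A^{-1}\bigr)
\end{equation*}
into four terms. The $A\cdot A^{-1}$ and $UCV \cdot A^{-1}$ pieces produce $\mathbb{I} + UCVA^{-1}$, while the remaining two terms are
\begin{equation*}
	-\,U S^{-1} V A^{-1} \;-\; UCV A^{-1} U S^{-1} V A^{-1} \;=\; -\bigl(U + UCVA^{-1}U\bigr) S^{-1} V A^{-1}.
\end{equation*}
The one observation that makes everything cancel is the factorization $U + UCVA^{-1}U = UC\bigl(C^{-1} + VA^{-1}U\bigr) = UCS$, so this equals $-UCS S^{-1} V A^{-1} = -UCVA^{-1}$. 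Hence the product collapses to $\mathbb{I} + UCVA^{-1} - UCVA^{-1} = \mathbb{I}$.

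Since $A + UCV$ is a square matrix, a right inverse is automatically a two-sided inverse, so the computation above suffices; alternatively the same manipulation can be repeated multiplying on the other side. I do not anticipate any genuine obstacle here — this is a textbook identity and the proof is a short verification — the only point that deserves a word of care is making the implicit nonsingularity hypotheses on $C$ and $C^{-1} + VA^{-1}U$ explicit, and recognizing the factoring trick $U + UCVA^{-1}U = UCS$ that drives the cancellation.
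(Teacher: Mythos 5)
Your verification is correct: the expansion into four terms, the factorization $U + UCVA^{-1}U = UC\,(C^{-1} + VA^{-1}U)$ that drives the cancellation, and the remark that a one-sided inverse of a square matrix is two-sided are all sound. The paper itself states this lemma without proof, treating it as a standard identity, so there is no argument to compare against; your direct multiplication is the textbook route and fills that gap cleanly. Your observation that the statement as printed implicitly needs $C$ and $C^{-1} + VA^{-1}U$ to be invertible is also a fair criticism of the lemma as formulated — those hypotheses are missing from the paper's statement, though, as you note, they hold automatically in the application \eqref{eq:definition-of-p-epsilon-and-lambda-epsilon-1d} where $C = \mathbb{I}$ and $A = N\alpha_N P_\epsilon$ is symmetric positive definite.
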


\begin{lemma}[Fatou's lemma]\label{lemma:fatou-lemma}
	Let $X_1, X_2, \cdots$ be a sequence of nonnegative random variables. Then, there holds
	\begin{equation*}
		\mathbb{E}\Big[ \liminf_{n \to \infty} X_n \Big] \leq \liminf_{n \to \infty} \mathbb{E}[X_n].
	\end{equation*}
\end{lemma}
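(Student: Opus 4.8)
The plan is to derive Fatou's lemma from the Monotone Convergence Theorem (MCT), which I will treat as the primitive result imported from measure/integration theory. \emph{First}, I would introduce the auxiliary sequence
\begin{equation*}
	Y_n = \inf_{k \geq n} X_k,\quad n = 1, 2, \cdots.
\end{equation*}
Each $Y_n$ is nonnegative and measurable, being a countable infimum of the measurable random variables $X_k$; moreover $\{Y_n\}$ is nondecreasing in $n$, and its pointwise (monotone) limit is exactly $\sup_n Y_n = \liminf_{n\to\infty} X_n$.

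\emph{Second}, I would apply the MCT to the nondecreasing nonnegative sequence $\{Y_n\}$ to obtain
\begin{equation*}
	\mathbb{E}\Big[\liminf_{n\to\infty} X_n\Big] = \mathbb{E}\Big[\lim_{n\to\infty} Y_n\Big] = \lim_{n\to\infty}\mathbb{E}[Y_n].
\end{equation*}

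\emph{Third}, the comparison step: for every fixed $n$ and every $m \geq n$ we have $Y_n \leq X_m$ pointwise, so monotonicity of expectation gives $\mathbb{E}[Y_n] \leq \mathbb{E}[X_m]$ for all $m \geq n$, hence $\mathbb{E}[Y_n] \leq \inf_{m \geq n}\mathbb{E}[X_m]$. Substituting this bound into the previous identity and letting $n \to \infty$,
\begin{equation*}
	\mathbb{E}\Big[\liminf_{n\to\infty} X_n\Big] = \lim_{n\to\infty}\mathbb{E}[Y_n] \leq \lim_{n\to\infty}\inf_{m\geq n}\mathbb{E}[X_m] = \liminf_{n\to\infty}\mathbb{E}[X_n],
\end{equation*}
which is the asserted inequality.

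The only genuine obstacle is that this argument is not self-contained: it rests on the MCT and on the measurability of countable infima, both standard facts that I would simply invoke. Since Fatou's lemma is used here purely as an auxiliary tool — to be combined later with the regularized matrix $P_\epsilon = P + \epsilon\mathbb{I}$ and the passage $\epsilon \to 0^+$ so as to push expectations through a limit — the most economical option in the paper is to cite a standard reference; the derivation above is included only for completeness.
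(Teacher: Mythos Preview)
Your proof is correct and is the standard textbook derivation of Fatou's lemma from the Monotone Convergence Theorem. The paper itself does not prove this lemma at all: it is merely stated as a well-known auxiliary result and then invoked in the proof of Lemma~\ref{lemma:mse-noise-part} to pass the limit $\epsilon \to 0^+$ through the expectation, exactly as you anticipate in your final paragraph.
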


\begin{lemma}[Markov's inequality]
	Suppose that $X$ is a nonnegative random variable, and $a > 0$.
	Then, \red{the following inequality is satisfied,}
	\begin{equation}\label{eq:markov-inequality}
		\mathbb{P} (X \geq a) \leq \frac{\mathbb{E}[X]}{a}.
	\end{equation}
\end{lemma}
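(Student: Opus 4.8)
The plan is to bound $\mathbb{E}[X]$ from below by discarding the contribution of $X$ on the complement of the event $\{X \geq a\}$, which is harmless precisely because $X$ is nonnegative. Concretely, I would introduce the indicator random variable $\mathbbm{1}_{\{X \geq a\}}$ and record the pointwise inequality
\begin{equation*}
	X \;\geq\; X \cdot \mathbbm{1}_{\{X \geq a\}} \;\geq\; a \cdot \mathbbm{1}_{\{X \geq a\}},
\end{equation*}
where the first step uses $X \geq 0$ (so restricting to $\{X \geq a\}$ only removes a nonnegative term) and the second uses that $X \geq a$ wherever $\mathbbm{1}_{\{X \geq a\}} = 1$.

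Next I would take expectations across this chain. By monotonicity of expectation for nonnegative random variables (which is valid even allowing the value $+\infty$) together with $\mathbb{E}\big[\mathbbm{1}_{\{X \geq a\}}\big] = \mathbb{P}(X \geq a)$, one obtains $\mathbb{E}[X] \geq a\,\mathbb{P}(X \geq a)$, and dividing by $a > 0$ yields the claimed bound \eqref{eq:markov-inequality}. An equivalent route, if one prefers to avoid indicators, is the layer-cake identity $\mathbb{E}[X] = \int_0^\infty \mathbb{P}(X > t)\,d{t}$ combined with $\mathbb{P}(X > t) \geq \mathbb{P}(X \geq a)$ for every $t \in [0, a)$, which gives $\mathbb{E}[X] \geq \int_0^a \mathbb{P}(X \geq a)\,d{t} = a\,\mathbb{P}(X \geq a)$.

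There is no genuine obstacle here; the statement is elementary and is included only for self-containedness. The single point worth a remark is the case $\mathbb{E}[X] = +\infty$, in which the inequality holds trivially, and indeed no integrability hypothesis on $X$ is needed for the argument above, since every quantity involved is nonnegative. Downstream, this lemma will be applied to nonnegative random variables formed from the mean-squared error of $f_{N,2}$ at the observation points and from $\Vert f_{N,2}'' \Vert_{L^2(0,1)}^2$, turning the expectation estimates obtained for those quantities into the confidence-interval and in-probability convergence statements promised in the introduction.
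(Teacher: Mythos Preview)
Your argument is correct and entirely standard; the paper itself does not supply a proof of this lemma, treating Markov's inequality as a well-known fact stated only for later use in Corollary~\ref{corollary:mse-noise-part-confidence-interval}. Your indicator-based derivation (and the layer-cake alternative) is exactly the textbook route, so nothing further is needed.
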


\red{Based on the above preparation, we introduce the following lemma, which estimate the mean squared value of the random part $f_{N, 2}$ and the $L^2$ norm of $f''_{N, 2}$ on interval $(0, 1)$} \blue{with $P$ replaced by $P_\epsilon$}.
\begin{lemma}
	For $\epsilon > 0$, \blue{there holds}
	\begin{equation}\label{eq:noise-part-perturbed-estimates}
		\begin{split}
		\mathbb{E} \Vert H_N \bm{\lambda}_{N, 2}^{\epsilon} \Vert_2^2 &\leq \sigma^2 (M+3), \\
		\mathbb{E} \Big[  (\bm{\lambda}_{N, 2}^\epsilon)^\top P_\epsilon \bm{\lambda}_{N, 2}^\epsilon \Big] &\leq \frac{\sigma^2 (M+3)}{4 N \alpha_N}.
		\end{split}
	\end{equation}
\end{lemma}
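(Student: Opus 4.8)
The plan is to exploit the explicit formula $\bm{\lambda}_{N,2}^{\epsilon} = (N\alpha_N P_\epsilon + H_N^\top H_N)^{-1} H_N^\top \bm{\eta}_N$ together with the fact that $\bm{\eta}_N$ has mean zero and covariance $\sigma^2 \mathbb{I}_N$, so both quantities in \eqref{eq:noise-part-perturbed-estimates} are expectations of quadratic forms in $\bm{\eta}_N$. Abbreviate $B := N\alpha_N P_\epsilon + H_N^\top H_N$, which is symmetric positive definite for $\epsilon > 0$ since $P_\epsilon = P + \epsilon\mathbb{I}$ is positive definite. For any fixed symmetric matrix $S \in \mathbb{R}^{N\times N}$ we have $\mathbb{E}[\bm{\eta}_N^\top S \bm{\eta}_N] = \sigma^2 \operatorname{tr}(S)$; applying this with $S = H_N B^{-1} H_N^\top$ gives $\mathbb{E}\Vert H_N \bm{\lambda}_{N,2}^\epsilon\Vert_2^2 = \mathbb{E}[\bm{\eta}_N^\top H_N B^{-1} H_N^\top H_N B^{-1} H_N^\top \bm{\eta}_N] = \sigma^2 \operatorname{tr}(H_N B^{-1} H_N^\top H_N B^{-1} H_N^\top)$, and with $S = H_N B^{-1} P_\epsilon B^{-1} H_N^\top$ gives $\mathbb{E}[(\bm{\lambda}_{N,2}^\epsilon)^\top P_\epsilon \bm{\lambda}_{N,2}^\epsilon] = \sigma^2 \operatorname{tr}(H_N B^{-1} P_\epsilon B^{-1} H_N^\top)$.

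The main work is then to bound these two traces. Introduce $Q := B^{-1/2} H_N^\top H_N B^{-1/2}$ and $R := N\alpha_N B^{-1/2} P_\epsilon B^{-1/2}$; these are symmetric positive semidefinite and satisfy $Q + R = \mathbb{I}_{M+3}$, so both have eigenvalues in $[0,1]$ and $\operatorname{tr}(Q) \le M+3$. For the first estimate, a cyclic permutation of the trace gives $\operatorname{tr}(H_N B^{-1} H_N^\top H_N B^{-1} H_N^\top) = \operatorname{tr}(Q^2) \le \operatorname{tr}(Q) \le M+3$, since $0 \preceq Q \preceq \mathbb{I}$ forces $Q^2 \preceq Q$. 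For the second, cyclic permutation yields $\operatorname{tr}(H_N B^{-1} P_\epsilon B^{-1} H_N^\top) = \operatorname{tr}(Q R)/(N\alpha_N)$, and since $Q$ and $R$ are simultaneously the spectral decomposition partners $Q = \sum \mu_k v_k v_k^\top$, $R = \sum(1-\mu_k) v_k v_k^\top$ (they commute because $R = \mathbb{I}-Q$), we get $\operatorname{tr}(QR) = \sum_k \mu_k(1-\mu_k) \le (M+3)/4$, using $t(1-t) \le 1/4$ on $[0,1]$. Multiplying through by $\sigma^2$ and dividing by $N\alpha_N$ gives the claimed bound $\sigma^2(M+3)/(4N\alpha_N)$.

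The one point requiring a little care — and the step I expect to be the main obstacle, though it is more bookkeeping than difficulty — is justifying the trace identity $\mathbb{E}[\bm{\eta}_N^\top S \bm{\eta}_N] = \sigma^2 \operatorname{tr}(S)$ under only the stated second-moment hypotheses $\mathbb{E}[\eta_i\eta_j] = \sigma^2\delta_{ij}$ (no independence, no higher moments assumed); this follows immediately by linearity of expectation, $\mathbb{E}[\bm{\eta}_N^\top S \bm{\eta}_N] = \sum_{i,j} S_{ij}\mathbb{E}[\eta_i\eta_j] = \sigma^2\sum_i S_{ii}$, and needs the $\eta_i$ to be in $L^2$ only. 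I would also remark explicitly that $B^{-1/2}$ and hence $Q$, $R$ are well defined precisely because $\epsilon > 0$ makes $P_\epsilon$, and therefore $B$, strictly positive definite — this is exactly why the perturbation by $\epsilon\mathbb{I}$ was introduced — and that the identity $Q + R = \mathbb{I}$ is just $B^{-1/2}(H_N^\top H_N + N\alpha_N P_\epsilon)B^{-1/2} = B^{-1/2} B B^{-1/2} = \mathbb{I}$.
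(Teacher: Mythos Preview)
Your proof is correct, and it reaches the same bounds as the paper by a genuinely different and somewhat cleaner route.

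The paper works in the $N\times N$ world: it sets $S=H_NP_\epsilon^{-1}H_N^\top$, uses the Woodbury identity to rewrite $\bm{\lambda}_{N,2}^\epsilon=P_\epsilon^{-1}H_N^\top\big((N\alpha_N)\mathbb{I}+S\big)^{-1}\bm{\eta}_N$, and then bounds the two traces as $\sigma^2\sum_i t_i^2/(t_i+N\alpha_N)^2$ and $\sigma^2\sum_i t_i/(t_i+N\alpha_N)^2$, where the $t_i$ are the eigenvalues of $S$; the point is that at most $M+3$ of the $t_i$ are nonzero, and each summand is bounded by $1$ or $1/(4N\alpha_N)$ respectively. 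You instead stay in the $(M+3)\times(M+3)$ world, never invoke Woodbury, and extract the bounds from the single algebraic identity $Q+R=\mathbb{I}_{M+3}$, which simultaneously forces all eigenvalues of $Q$ into $[0,1]$ and lets you read off $\operatorname{tr}(Q^2)\le\operatorname{tr}(Q)\le M+3$ and $\operatorname{tr}(QR)=\sum\mu_k(1-\mu_k)\le(M+3)/4$. The two arguments are in fact spectrally equivalent under the correspondence $\mu=t/(t+N\alpha_N)$, but your version is more self-contained: the rank constraint in the paper's argument is replaced by the dimension of the ambient space, and the AM--GM step $t_i/(t_i+N\alpha_N)^2\le 1/(4N\alpha_N)$ becomes the elementary $t(1-t)\le 1/4$.
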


\begin{proof}
(P1) Let
\begin{equation*}
	S = H_N P_\epsilon^{-1} H_N^\top,
\end{equation*}
then $S$ is a positive semidefinite matrix.
Denote the eigen-decomposition of $S$ by
\begin{equation*}
	S = U T U^\top,
\end{equation*}
\red{in which} $U \in \mathbb{R}^{N \times N}$ is an orthogonal matrix, $T \in \mathbb{R}^{N \times N}$ is a diagonal matrix that is composed of all eigenvalues of $S$ in a nonascending order, i.e.,
\begin{equation*}
	T = \mathrm{diag}\{t_1, t_2, \cdots, t_M, t_{M+1}, t_{M+2}, t_{M+3}, \cdots, t_N\},
\end{equation*}
where $t_i \geq t_{i+1}$, $1 \leq i \leq N-1$.
Since $\mathrm{rank}(S) \leq \mathrm{rank}(P_\epsilon^{-1}) \leq M+3$, \blue{the number of nonzero eigenvalues cannot exceed $M+3$, hence}
\begin{equation*}
	t_{i} = 0,\quad M+4 \leq i \leq N.
\end{equation*}

(P2) Since $\eta_i, 1 \leq i \leq N$ are uncorrelated random variables,
\begin{equation*}
	\mathbb{E}[\bm{\eta}_N \bm{\eta}_N^\top] = \sigma^2 \mathbb{I}_N
\end{equation*}
\blue{is satisfied}.

(P3) We need \blue{an equivalent} expression of $\bm{\lambda}_{N, 2}^{\epsilon}$.
\red{Referring to} the definition of $\bm{\lambda}_{N, 2}^{\epsilon}$ \eqref{eq:definition-of-p-epsilon-and-lambda-epsilon-1d} and Woodbury matrix identity \eqref{eq:woodbury-matrix-identity}, we have
\begin{equation*}
    \begin{split}
    \bm{\lambda}^\epsilon_{N, 2} &= \big( N \alpha_N P_\epsilon + H_N^\top H_N \big)^{-1} H_N^\top \bm{\eta}_N \\
        &= (N \alpha_N P_\epsilon)^{-1} H_N^\top \bm{\eta}_N \\
        &\qquad - (N \alpha_N P_\epsilon)^{-1} H_N^\top \big( \mathbb{I} + H_N (N \alpha_N P_\epsilon)^{-1} H_N^\top \big)^{-1} H_N (N \alpha_N P_\epsilon)^{-1} H_N^\top \bm{\eta}_N \\
        &= (N \alpha_N)^{-1} P_\epsilon^{-1} H_N^\top \Big[ \mathbb{I} - \big( (N \alpha_N) \mathbb{I} + H_N P_\epsilon^{-1} H_N^\top \big)^{-1} H_N P_\epsilon^{-1} H_N^\top \Big] \bm{\eta}_N \\
        &= (N \alpha_N)^{-1} P_\epsilon^{-1} H_N^\top \big( (N \alpha_N) \mathbb{I} + H_N P_\epsilon^{-1} H_N^\top \big)^{-1} \Big[ (N \alpha_N) \mathbb{I} + H_N P_\epsilon^{-1} H_N^\top - H_N P_\epsilon^{-1} H_N^\top \Big] \bm{\eta}_N \\
        &= P_\epsilon^{-1} H_N^\top \big( (N \alpha_N) \mathbb{I} + H_N P_\epsilon^{-1} H_N^\top \big)^{-1} \bm{\eta}_N \\
        &= P_\epsilon^{-1} H_N^\top \big( (N \alpha_N) \mathbb{I} + S \big)^{-1}\bm{\eta}_N.
    \end{split}
\end{equation*}

By (P1), (P2) and (P3), we have
\begin{equation*}
	\begin{split}
	\mathbb{E} \Vert H_N \bm{\lambda}_{N, 2}^{\epsilon} \Vert_2^2 &= \mathbb{E} \Big[ (\bm{\lambda}_{N, 2}^{\epsilon})^\top H_N^\top H_N \bm{\lambda}_{N, 2}^{\epsilon} \Big] \\
	&= \mathbb{E} \Big[ \bm{\eta}_N^\top \big( (N \alpha_N) \mathbb{I} + S \big)^{-1} S^2 \big( (N \alpha_N) \mathbb{I} + S \big)^{-1} \bm{\eta}_N \Big] \\
	&= \mathbb{E} \Big[ \mathrm{tr} \Big( S^2 \big( (N \alpha_N) \mathbb{I} + S \big)^{-2} \bm{\eta}_N \bm{\eta}_N^\top \Big) \Big] \\
	&= \mathrm{tr} \Big( S^2 \big( (N \alpha_N) \mathbb{I} + S \big)^{-2} \mathbb{E} [\bm{\eta}_N \bm{\eta}_N^\top] \Big) \\
	&= \sigma^2 \mathrm{tr} \Big( S^2 \big( (N \alpha_N) \mathbb{I} + S \big)^{-2} \Big) \\
	&= \sigma^2 \sum_{i=1}^{N} \frac{t_i^2}{(t_i + N \alpha_N)^2} \leq \sigma^2 (M+3).
	\end{split}
\end{equation*}
\red{The second estimate is derived by}
\begin{equation*}
	\begin{split}
	\mathbb{E} \Big[ (\bm{\lambda}_{N, 2}^{\epsilon})^\top P_\epsilon \bm{\lambda}_{N, 2}^{\epsilon} \Big] &= \mathbb{E} \Big[ \bm{\eta}_N^\top \big( (N \alpha_N) \mathbb{I} + S \big)^{-1} H_N P_\epsilon^{-1} P_\epsilon P_\epsilon^{-1} H_N^\top \big( (N \alpha_N) \mathbb{I} + S \big)^{-1} \bm{\eta}_N \Big] \\
	&= \mathbb{E} \Big[ \mathrm{tr} \Big( S \big( (N \alpha_N) \mathbb{I} + S \big)^{-2} \bm{\eta}_N \bm{\eta}_N^\top \Big) \Big] \\
	&= \sigma^2 \mathrm{tr} \Big( S \big( (N \alpha_N) \mathbb{I} + S \big)^{-2} \Big) \\
	&= \sigma^2 \sum_{i=1}^{N} \frac{t_i}{(t_i + N \alpha_N)^2} \\
	&\leq \sigma^2 \sum_{i=1}^{N} \frac{t_i}{4 t_i N \alpha_N} \leq \frac{\sigma^2 (M+3)}{4 N \alpha_N}.
	\end{split}
\end{equation*}
\end{proof}

\red{Then, let $\epsilon \to 0^+$, the application of  Fatou's lemma gives the following lemma.  }

\begin{lemma}\label{lemma:mse-noise-part}
	\blue{The mean squared value of $f_{N, 2}$ and the $L^2$ norm of $f''_{N, 2}$ can be estimated as}
	% For the random part $f_{N, 2}$, there holds
	\begin{equation*}
		\begin{split}
		\mathbb{E} \Big[ \frac{1}{N} \sum_{i=1}^{N} f_{N, 2}^2(x_i) \Big] &\leq \frac{\sigma^2(M+3)}{N}, \\
		\mathbb{E} \Vert f_{N, 2}'' \Vert_{L^2(0, 1)}^2 &\leq \frac{\sigma^2 (M+3)}{4 N \alpha_N}.
		\end{split}
	\end{equation*}
\end{lemma}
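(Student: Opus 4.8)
The plan is to deduce both inequalities of Lemma~\ref{lemma:mse-noise-part} from the perturbed estimates \eqref{eq:noise-part-perturbed-estimates} by sending the perturbation parameter $\epsilon \to 0^+$ and applying Fatou's lemma (Lemma~\ref{lemma:fatou-lemma}). First I would translate the two target quantities into vector language: by \eqref{eq:hx-property} one has $\frac{1}{N}\sum_{i=1}^N f_{N,2}^2(x_i) = \frac{1}{N}\Vert H_N\bm{\lambda}_{N,2}\Vert_2^2$, and by \eqref{eq:precision-matrix-property} one has $\Vert f_{N,2}''\Vert_{L^2(0,1)}^2 = \bm{\lambda}_{N,2}^\top P\bm{\lambda}_{N,2}$, so it suffices to bound $\mathbb{E}\Vert H_N\bm{\lambda}_{N,2}\Vert_2^2$ and $\mathbb{E}[\bm{\lambda}_{N,2}^\top P\bm{\lambda}_{N,2}]$.

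Next I would establish the pointwise (almost sure) convergence $\bm{\lambda}_{N,2}^\epsilon \to \bm{\lambda}_{N,2}$ as $\epsilon \to 0^+$. Recalling the definitions in \eqref{eq:definition-of-p-epsilon-and-lambda-epsilon-1d}, this amounts to continuity of $\epsilon \mapsto (N\alpha_N P_\epsilon + H_N^\top H_N)^{-1}$ near $\epsilon = 0$; the relevant matrix is invertible even at $\epsilon = 0$ because $N\alpha_N P + H_N^\top H_N$ is positive definite — this is precisely the positive-definiteness of $A$ used in Theorem~\ref{thm:unique-minimizer-of-tikhonov-functional-in-vm} — so matrix inversion is continuous there and the convergence holds for every fixed realization of $\bm{\eta}_N$. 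Consequently $\Vert H_N\bm{\lambda}_{N,2}^\epsilon\Vert_2^2 \to \Vert H_N\bm{\lambda}_{N,2}\Vert_2^2$ and $(\bm{\lambda}_{N,2}^\epsilon)^\top P_\epsilon\bm{\lambda}_{N,2}^\epsilon \to \bm{\lambda}_{N,2}^\top P\bm{\lambda}_{N,2}$ almost surely, the extra term $\epsilon\Vert\bm{\lambda}_{N,2}^\epsilon\Vert_2^2$ vanishing in the limit.

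Then I would fix any sequence $\epsilon_n \downarrow 0$ and apply Lemma~\ref{lemma:fatou-lemma} to the nonnegative random variables $\Vert H_N\bm{\lambda}_{N,2}^{\epsilon_n}\Vert_2^2$ and $(\bm{\lambda}_{N,2}^{\epsilon_n})^\top P_{\epsilon_n}\bm{\lambda}_{N,2}^{\epsilon_n}$: since their almost sure limits are $\Vert H_N\bm{\lambda}_{N,2}\Vert_2^2$ and $\bm{\lambda}_{N,2}^\top P\bm{\lambda}_{N,2}$ respectively, Fatou's lemma together with \eqref{eq:noise-part-perturbed-estimates} gives $\mathbb{E}\Vert H_N\bm{\lambda}_{N,2}\Vert_2^2 \leq \sigma^2(M+3)$ and $\mathbb{E}[\bm{\lambda}_{N,2}^\top P\bm{\lambda}_{N,2}] \leq \sigma^2(M+3)/(4N\alpha_N)$; dividing the first by $N$ yields the two claimed bounds. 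The only real subtlety, and the step I would treat most carefully, is this limit passage — checking invertibility of $N\alpha_N P + H_N^\top H_N$ so that $\bm{\lambda}_{N,2}$ and its limit are well defined, and phrasing Fatou's lemma along a countable sequence $\epsilon_n$ since it is stated for sequences. The remainder is routine bookkeeping.
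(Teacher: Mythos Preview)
Your proposal is correct and follows essentially the same approach as the paper: translate the two quantities into $\Vert H_N\bm{\lambda}_{N,2}\Vert_2^2$ and $\bm{\lambda}_{N,2}^\top P\bm{\lambda}_{N,2}$, use the positive definiteness of $N\alpha_N P + H_N^\top H_N$ from Theorem~\ref{thm:unique-minimizer-of-tikhonov-functional-in-vm} to get $\bm{\lambda}_{N,2}^\epsilon \to \bm{\lambda}_{N,2}$ almost surely, and then pass to the limit via Fatou's lemma in \eqref{eq:noise-part-perturbed-estimates}. The only cosmetic difference is that the paper writes out an explicit bound $\Vert\bm{\lambda}_{N,2}^\epsilon - \bm{\lambda}_{N,2}\Vert_2 \leq \epsilon\, N\alpha_N\,\lambda_{\min}(B)^{-1}\Vert\bm{\lambda}_{N,2}\Vert_2$ rather than invoking continuity of matrix inversion, but the content is identical.
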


\begin{proof}
We first show that $\bm{\lambda}_{N, 2} = \lim_{\epsilon \to 0^+} \bm{\lambda}_{N, 2}^{\epsilon}$ holds almost surely.
Denote
\begin{equation*}
	B = N \alpha_N P + H_N^\top H_N,
\end{equation*}
\red{referring to } Theorem \ref{thm:unique-minimizer-of-tikhonov-functional-in-vm}, \red{the matrix} $B$ is positive definite. 
% Thus,
% \begin{equation*}
% 	\lambda_{\max}(B) \geq \lambda_{\min}(B) > 0,
% \end{equation*}
% where $\lambda_{\max}(B)$ and $\lambda_{\min}(B)$ \red{contains} the largest and the smallest eigenvalue of $B$, respectively.
\blue{Thus, denote by $\lambda_{\min}(B)$ the smallest eigenvalue of $B$, we have $\lambda_{\min}(B) > 0$.}
Since
\begin{equation*}
	\begin{split}
	\bm{\lambda}_{N, 2} &= B^{-1} H_N^\top \bm{\eta}_N, \\
	\bm{\lambda}_{N, 2}^{\epsilon} &= ( N \alpha_N P_\epsilon + H_N^\top H_N )^{-1} H_N^\top \bm{\eta}_N = (B + \epsilon N \alpha_N \mathbb{I})^{-1} H_N^\top \bm{\eta}_N,
	\end{split}
\end{equation*}
we have
\begin{equation*}
	(B + \epsilon N \alpha_N \mathbb{I}) (\bm{\lambda}_{N, 2}^\epsilon - \bm{\lambda}_{N, 2}) = - \epsilon N \alpha_N \bm{\lambda}_{N, 2}.
\end{equation*}
\blue{It follows that}
\begin{equation*}
	\Vert \bm{\lambda}_{N, 2}^\epsilon - \bm{\lambda}_{N, 2} \Vert_2^2 \leq \epsilon^2 (N \alpha_N)^2 \Vert (B + \epsilon N \alpha_N \mathbb{I})^{-1} \Vert_2^2 \Vert \bm{\lambda}_{N, 2} \Vert_2^2 \leq \epsilon^2 \frac{(N \alpha_N)^2}{\lambda_{\min}^2} \Vert \bm{\lambda}_{N, 2} \Vert_2^2.
\end{equation*}
Hence, 
\begin{equation*}
	\lim_{\epsilon \to 0^+} \bm{\lambda}_{N, 2}^\epsilon = \bm{\lambda}_{N, 2},\quad\text{a.s.}
\end{equation*}

\red{Therefore}, the following two equalities
\begin{equation*}
	\begin{split}
	\frac{1}{N} \sum_{i=1}^{N} f_{N, 2}^2(x_i) &= \frac{1}{N} \Vert H_N \bm{\lambda}_{N, 2} \Vert_2^2 = \frac{1}{N} \lim_{\epsilon \to 0^+} \Vert H_N \bm{\lambda}_{N, 2}^\epsilon \Vert_2^2, \\
	\lim_{\epsilon \to 0^+} (\bm{\lambda}_{N, 2}^{\epsilon})^\top P_\epsilon \bm{\lambda}_{N, 2}^{\epsilon} &= \lim_{\epsilon \to 0^+} (\bm{\lambda}_{N, 2}^{\epsilon})^\top P \bm{\lambda}_{N, 2}^{\epsilon} + \lim_{\epsilon \to 0^+} \epsilon \Vert \bm{\lambda}_{N, 2}^{\epsilon} \Vert_2^2 = \bm{\lambda}_{N, 2}^\top P \bm{\lambda}_{N, 2}
	\end{split}
\end{equation*}
almost surely hold. \red{In addition, the application of} Fatou's lemma yields
\begin{equation*}
\begin{split}
	\mathbb{E} \Big[ \frac{1}{N} \sum_{i=1}^{N} f_{N, 2}^2(x_i) \Big] &= \frac{1}{N} \mathbb{E} \Big[ \lim_{\epsilon \to 0^+} \Vert H_N \bm{\lambda}_{N, 2}^\epsilon \Vert_2^2 \Big] \\
	&\leq \frac{1}{N} \liminf_{\epsilon \to 0^+} \mathbb{E} \Vert H_N \bm{\lambda}_{N, 2}^{\epsilon} \Vert_2^2 \leq \frac{\sigma^2 (M+3)}{N},
\end{split}
\end{equation*}
and
\begin{equation*}
	\begin{split}
	\mathbb{E} \Vert f_{N, 2}'' \Vert_{L^2(0, 1)}^2 &= \mathbb{E} \Big[ \bm{\lambda}_{N, 2}^\top P \bm{\lambda}_{N, 2} \Big] \\
	&= \mathbb{E} \Big[ \lim_{\epsilon \to 0^+} (\bm{\lambda}_{N, 2}^{\epsilon})^\top P_\epsilon \bm{\lambda}_{N, 2}^{\epsilon} \Big] \\
	&\leq \liminf_{\epsilon \to 0^+} \mathbb{E} \Big[  (\bm{\lambda}_{N, 2}^\epsilon)^\top P_\epsilon \bm{\lambda}_{N, 2}^\epsilon \Big] \\
	&\leq \frac{\sigma^2 (M+3)}{4 N \alpha_N}.
	\end{split}
\end{equation*}
\end{proof}

Finally, by Markov's inequality, the following confidence interval estimates are valid.

\begin{corollary}\label{corollary:mse-noise-part-confidence-interval}
	Suppose that $M \geq 3$.
	\red{Choosing the regularization parameter} $\alpha_N  = M \sigma^2 / N + d^4$, for arbitrary $\delta \in (0, 1)$, \blue{the estimates for the mean squared value of $f_{N, 2}$ and the $L^2$ norm of $f''_{N, 2}$}
	\begin{equation}\label{eq:noise-part-mse-estimates}
		\begin{split}
		\frac{1}{N} \sum_{i=1}^{N} f_{N, 2}^2(x_i) &\leq \frac{4 \sigma^2 M}{\delta N}, \\
		\Vert f_{N, 2}'' \Vert_{L^2(0, 1)}^2 &\leq \frac{1}{\delta}
		\end{split}
	\end{equation}
	hold with a probability of at least $1 - \delta$.
\end{corollary}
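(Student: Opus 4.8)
The plan is to treat the two quantities
\[
X = \frac{1}{N}\sum_{i=1}^{N} f_{N,2}^2(x_i), \qquad Y = \Vert f_{N,2}'' \Vert_{L^2(0,1)}^2
\]
as nonnegative random variables, control their expectations by Lemma \ref{lemma:mse-noise-part}, and then apply Markov's inequality \eqref{eq:markov-inequality} to each one separately, finishing with a union bound. Since Lemma \ref{lemma:mse-noise-part} already delivers $\mathbb{E}[X] \le \sigma^2(M+3)/N$ and $\mathbb{E}[Y] \le \sigma^2(M+3)/(4N\alpha_N)$, essentially all that remains is constant bookkeeping, which is where the hypothesis $M \ge 3$ enters.

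First I would simplify these expectation bounds. Because $\alpha_N = M\sigma^2/N + d^4 \ge M\sigma^2/N$, we get
\[
\mathbb{E}[Y] \le \frac{\sigma^2(M+3)}{4N\alpha_N} \le \frac{\sigma^2(M+3)}{4N}\cdot\frac{N}{M\sigma^2} = \frac{M+3}{4M},
\]
and the inequality $M+3 \le 2M$ (equivalent to $M\ge 3$) yields $\mathbb{E}[Y] \le \tfrac12$ and likewise $\mathbb{E}[X] \le 2\sigma^2 M/N$. Next, apply Markov's inequality with thresholds chosen so that each ``bad'' event has probability at most $\delta/2$: with $a = 4\sigma^2 M/(\delta N)$,
\[
\mathbb{P}\!\left( X \ge \frac{4\sigma^2 M}{\delta N}\right) \le \frac{\mathbb{E}[X]}{a} \le \frac{2\sigma^2 M/N}{4\sigma^2 M/(\delta N)} = \frac{\delta}{2},
\]
and with $a = 1/\delta$,
\[
\mathbb{P}\!\left( Y \ge \frac{1}{\delta}\right) \le \frac{\mathbb{E}[Y]}{1/\delta} \le \frac{\delta}{2}.
\]

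Finally, the complement of the union of these two events has probability at least $1 - \delta/2 - \delta/2 = 1-\delta$, and on that event both estimates in \eqref{eq:noise-part-mse-estimates} hold simultaneously. I do not anticipate a genuine obstacle here; the only point requiring a little care is the decision to split the failure budget as $\delta/2 + \delta/2$ across the two events (which is what forces the factor $4$ rather than $2$ in the first bound), and checking that $M\ge 3$ is precisely the condition making both $M+3\le 2M$ and $\tfrac{M+3}{4M}\le\tfrac12$ valid.
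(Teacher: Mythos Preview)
Your proposal is correct and matches the paper's approach exactly: the paper's proof is the one-line remark ``substitute $M\ge 3$ and the prior choice of $\alpha_N$ into the previous lemma, then apply Markov's inequality,'' and you have faithfully unpacked precisely those steps, including the $\delta/2+\delta/2$ union bound that accounts for the constants $4$ and $1$ in the stated thresholds.
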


\begin{proof}
	We substitute $M \geq 3$ and the prior choice rule of $\alpha_N$ in the previous lemma, then a direct application of Markov's inequality \eqref{eq:markov-inequality} gives the results.
\end{proof}
% subsubsection random_part (end)

% subsection mean_squared_error_at_observation_points (end)

\subsection{Error bounds in continuous \texorpdfstring{$L^2$}{L2} norms} % (fold)
\label{sub:error_bounds_in_continuous_l2_norms}

 We substitute the above estimates into \blue{the estimate in} Lemma \ref{lemma:estimation-longer-interval} to obtain error bounds in continuous $L^2$ norms.

\begin{theorem}\label{theorem:error-bounds-given-n}
	Suppose that $f \in W^{2, 2}(0, 1)$, $M \geq 3$, $N \geq M$, \blue{the indicator function is bounded above by}
	\begin{equation*}
		\sup_{x \in [0, 1]} \rho_N(x) \leq \beta_N.
	\end{equation*}
	\blue{For nonnegative integers p, q with $0 \leq p < q \leq M$ and $q - p \geq 2 \sqrt{K_\ast}$, let subinterval $I' := (pd, qd)$.}
	\blue{Choosing the regularization parameter $\alpha_N = M \sigma^2 / N + d^4$, if the indicator function is bounded below on $I'$ by}
	\begin{equation*}
        \inf_{x \in I'} \rho_{N}(x) \geq \gamma_N > 0,
    \end{equation*}
	then for arbitrary $\delta \in (0, 1)$, \blue{the following estimates for the $L^2$ norms of $e_N$ and $e'_N$}
	\begin{equation*}
		\begin{split}
		\Vert e_N \Vert_{L^2(I')} &\leq C_1 \Big( \frac{M \sigma^2}{N} \Big)^{\frac{1}{2}} + C_2 M^{-2}, \\
		\Vert e'_N \Vert_{L^2(I')} &\leq C_3 \Big( \frac{M \sigma^2}{N} \Big)^{\frac{1}{4}} + C_4 M^{-1}
		\end{split}
	\end{equation*}
	hold with a probability of at least $1 - \delta$, where the constants $C_1, C_2, C_3$ and $C_4$ are independent of $M$ and $N$.
	\blue{In Sobolev interpolation inequality \eqref{eq:sobolev-interpolation} with $I_s = I'$ and $\epsilon_0 = \max \{\sqrt{\sigma}, (q-p) d\}$, denoting by $K_\sigma$ the Sobolev constant $K(\epsilon_0, \vert I' \vert)$, the constants can be written as}
	% Denote by $K_\sigma$ the Sobolev constant $K(\epsilon_0, (q-p) d)$ in Sobolev interpolation inequality \eqref{eq:sobolev-interpolation} where
	% \begin{equation*}
	% 	\epsilon_0 = \max \Big\{ \sigma^{\frac{1}{2}}, (q-p) d \Big\},
	% \end{equation*}
	% the constants read
	\begin{equation*}
		\begin{split}
		C_1 &= 2 \gamma_N^{-\frac{1}{2}} \Vert f'' \Vert_{L^2(0, 1)} + \frac{4}{\sqrt{\delta \gamma_N}}, \\
		C_2 &= \Vert f'' \Vert_{L^2(0, 1)} \sqrt{\frac{9 \beta_N + 8}{2 \gamma_N} + 36 K_\ast^2 \beta_N + 64 K_\ast^2} + 4 K_\ast \delta^{-\frac{1}{2}}, \\
		C_3 &= K_\sigma^{\frac{1}{2}} \Big( \Vert f'' \Vert_{L^2(0, 1)} \sqrt{4 \gamma_N^{-1} + \frac{9}{4} \beta_N + 4} + \delta^{-\frac{1}{2}} \sqrt{16 \gamma_N^{-1} + 1} \Big) , \\
		C_4 &= K_\sigma^{\frac{1}{2}} \Big( \Vert f'' \Vert_{L^2(0, 1)} \sqrt{\frac{9 \beta_N + 8}{8 K_\ast \gamma_N} + 18 K_\ast \beta_N + 32 K_\ast} + 2 \sqrt{2} \delta^{-\frac{1}{2}} K_\ast^{\frac{1}{2}} \Big).
		\end{split}
	\end{equation*}
\end{theorem}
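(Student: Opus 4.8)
The backbone is the linearity decomposition $e_N = e_{N,1} + f_{N,2}$ combined with Lemma~\ref{lemma:estimation-longer-interval}: on the admissible interval $I'=(pd,qd)$ the hypotheses $q-p\ge 2\sqrt{K_\ast}$ and $\inf_{I'}\rho_N\ge\gamma_N$ are exactly those of that lemma, which converts an $L^2(I')$ norm into a mean-squared value at the observation points plus a multiple of $d^4\|\cdot''\|_{L^2(I')}^2$ --- and both ingredients are already controlled, for $e_{N,1}$ by the deterministic estimates \eqref{eq:truth-part-mse-estimates} and for $f_{N,2}$ by the confidence-interval estimates \eqref{eq:noise-part-mse-estimates} of Corollary~\ref{corollary:mse-noise-part-confidence-interval}.

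\emph{Step 1 (the $L^2$ bound on $e_N$).} Apply Lemma~\ref{lemma:estimation-longer-interval} to $u=e_{N,1}$ and substitute \eqref{eq:truth-part-mse-estimates}, using $\|e_{N,1}''\|_{L^2(I')}\le\|e_{N,1}''\|_{L^2(0,1)}$ and $d=M^{-1}$; this yields $\|e_{N,1}\|_{L^2(I')}^2\le \tfrac{4}{\gamma_N}\|f''\|_{L^2(0,1)}^2\,\tfrac{M\sigma^2}{N}+\big(\tfrac{9\beta_N+8}{2\gamma_N}+36K_\ast^2\beta_N+64K_\ast^2\big)\|f''\|_{L^2(0,1)}^2\,M^{-4}$. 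Apply the same lemma to $u=f_{N,2}$ and substitute \eqref{eq:noise-part-mse-estimates} (valid with probability at least $1-\delta$) to get $\|f_{N,2}\|_{L^2(I')}^2\le \tfrac{16}{\gamma_N\delta}\,\tfrac{M\sigma^2}{N}+\tfrac{16K_\ast^2}{\delta}M^{-4}$. Taking square roots with $\sqrt{a+b}\le\sqrt a+\sqrt b$ and adding by the triangle inequality gives $\|e_N\|_{L^2(I')}\le C_1(M\sigma^2/N)^{1/2}+C_2M^{-2}$ with the stated $C_1,C_2$; this part is purely mechanical.

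\emph{Step 2 (the $L^2$ bound on $e_N'$).} Apply the Sobolev interpolation inequality \eqref{eq:sobolev-interpolation} separately to $u=e_{N,1}$ and $u=f_{N,2}$ on $I_s=I'$,
\begin{equation*}
\|u'\|_{L^2(I')}^2\le K_\sigma\big(\epsilon^{-2}\|u\|_{L^2(I')}^2+\epsilon^2\|u''\|_{L^2(I')}^2\big),
\end{equation*}
with the single choice $\epsilon=\max\{(M\sigma^2/N)^{1/4},\,d\}$. Admissibility is the point that must be checked: $N\ge M$ gives $(M\sigma^2/N)^{1/4}\le\sqrt\sigma$, and $q-p\ge 1$ gives $d\le(q-p)d$, so $\epsilon\le\max\{\sqrt\sigma,(q-p)d\}=\epsilon_0$ and the inequality is legal with constant $K_\sigma=K(\epsilon_0,|I'|)$. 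Insert the Step~1 bounds for $\|e_{N,1}\|_{L^2(I')}$ and $\|f_{N,2}\|_{L^2(I')}$ together with $\|e_{N,1}''\|_{L^2(0,1)}^2\le(\tfrac94\beta_N+4)\|f''\|_{L^2(0,1)}^2$ and $\|f_{N,2}''\|_{L^2(0,1)}^2\le\delta^{-1}$, and use $\epsilon\ge(M\sigma^2/N)^{1/4}$ (so $\epsilon^{-2}\,\tfrac{M\sigma^2}{N}\le(M\sigma^2/N)^{1/2}$), $\epsilon\ge d$ (so $\epsilon^{-2}d^4\le d^2$), and $\epsilon^2\le(M\sigma^2/N)^{1/2}+d^2$; after a square root this produces $\|e_{N,1}'\|_{L^2(I')}+\|f_{N,2}'\|_{L^2(I')}\le C_3(M\sigma^2/N)^{1/4}+C_4M^{-1}$ with the stated $C_3,C_4$ (the factor $K_\sigma^{1/2}$ coming out front). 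Since $e_N'=e_{N,1}'+f_{N,2}'$, a last triangle inequality finishes the proof, all estimates holding simultaneously on the single event of probability at least $1-\delta$ furnished by Corollary~\ref{corollary:mse-noise-part-confidence-interval}.

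\emph{Main obstacle.} The only genuinely delicate point is the choice of $\epsilon$ for the derivative estimate. The naive balancing $\epsilon\sim(M\sigma^2/N)^{1/4}$ makes the interpolation contribution $\epsilon^{-2}d^4$ behave like $(N/M\sigma^2)^{1/2}d^4$, which is unbounded as $N\to\infty$; hence $\epsilon$ must be floored at the mesh size $d$, and then one must ensure the floored $\epsilon$ still obeys $\epsilon\le\epsilon_0$ --- which forces the slightly unusual cut-off $\epsilon_0=\max\{\sqrt\sigma,(q-p)d\}$ in place of $\epsilon_0=|I'|$, $K_\ast=32$, and is exactly why the Sobolev constant appearing in $C_3,C_4$ is $K_\sigma$ rather than $32$. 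Everything else is a careful but routine propagation of the constants coming out of the two corollaries.
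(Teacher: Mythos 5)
Your proposal follows the paper's proof essentially verbatim: the same decomposition $e_N=e_{N,1}+f_{N,2}$, the same substitution of \eqref{eq:truth-part-mse-estimates} and \eqref{eq:noise-part-mse-estimates} into Lemma~\ref{lemma:estimation-longer-interval} for the $L^2$ bound (your Step~1 reproduces $C_1$ and $C_2$ exactly), and the same Sobolev-interpolation argument for the derivative. You also correctly isolate the one delicate point, namely that $\epsilon$ must be floored at the mesh scale and that admissibility $\epsilon\le\epsilon_0$ then forces the cutoff $\epsilon_0=\max\{\sqrt{\sigma},(q-p)d\}$.

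The one place you diverge is the value of the floor. You take $\epsilon=\max\{(M\sigma^2/N)^{1/4},d\}$, justified by $q-p\ge 1$; the paper takes the floor at the scale $2\sqrt{K_\ast}\,d$ (it writes $(q-p)d$, but the bounds it then asserts, $\epsilon^2\le (M\sigma^2/N)^{1/2}+4K_\ast d^2$ and $\epsilon^{-2}\le (4K_\ast d^2)^{-1}$, are those of the floor $2\sqrt{K_\ast}\,d$, which is admissible precisely because of the hypothesis $q-p\ge 2\sqrt{K_\ast}$). With your smaller floor you only get $\epsilon^{-2}d^4\le d^2$ rather than $\epsilon^{-2}d^4\le d^2/(4K_\ast)$, so the term $\frac{9\beta_N+8}{8K_\ast\gamma_N}$ in $C_4$ becomes $\frac{9\beta_N+8}{2\gamma_N}$, the noise contribution $2\sqrt{2}\,\delta^{-1/2}K_\ast^{1/2}$ becomes $\delta^{-1/2}\sqrt{16K_\ast^2+1}$, and so on --- i.e.\ your argument proves the theorem with the stated $C_1,C_2,C_3$ but with a $C_4$ larger than the one claimed (by roughly a factor $\sqrt{4K_\ast}$ in its dominant term). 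Since the substantive content of the theorem is the existence of $M,N$-independent constants and the rates $(M\sigma^2/N)^{1/4}+M^{-1}$, this is a cosmetic mismatch rather than a gap; to recover the exact $C_4$ you need only raise your floor to $2\sqrt{K_\ast}\,d$, which remains $\le (q-p)d\le\epsilon_0$ by the hypothesis $q-p\ge 2\sqrt{K_\ast}$ that your version otherwise leaves unused.
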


\begin{proof}
	First, we derive the error bounds for function reconstruction.
	For deterministic part $e_{N, 1} = f_{N, 1} - f$, we substitute \eqref{eq:truth-part-mse-estimates} into \eqref{eq:estimation-longer-interval}, \blue{it follows that}
	\begin{equation*}
		\begin{split}
		\Vert e_{N, 1} \Vert_{L^2(I')}^2 &\leq \frac{4}{N \gamma_N} \sum_{i=1}^{N} e_{N, 1}^2(x_i) + 16 K_\ast^2 d^4 \Vert e_{N, 1}'' \Vert_{L^2(I')}^2 \\
		&\leq \frac{4}{\gamma_N} \cdot \Big[ \frac{M \sigma^2}{N} + d^4 \Big( \frac{9}{8} \beta_N + 1 \Big) \Big] \Vert f'' \Vert_{L^2(0, 1)}^2 + 16 K_\ast^2 d^4 \cdot \Big( \frac{9}{4} \beta_N + 4 \Big) \Vert f'' \Vert_{L^2(0, 1)}^2 \\
		&\leq \frac{M \sigma^2}{N} \cdot \frac{4}{\gamma_N} \Vert f'' \Vert_{L^2(0, 1)}^2 + d^4 \cdot \Big( \frac{9 \beta_N + 8}{2 \gamma_N} + 36 K_\ast^2 \beta_N + 64 K_\ast^2 \Big) \Vert f'' \Vert_{L^2(0, 1)}^2.
		\end{split}
	\end{equation*}
	For random part $f_{N, 2}$, first by Corollary \ref{corollary:mse-noise-part-confidence-interval}, \eqref{eq:noise-part-mse-estimates} holds with a probability of at least $1 - \delta$.
	Then we substitute \eqref{eq:noise-part-mse-estimates} into into \eqref{eq:estimation-longer-interval}, \blue{it follows that}
	\begin{equation*}
		\begin{split}
		\Vert f_{N, 2} \Vert_{L^2(I')}^2 &\leq \frac{4}{N \gamma_N} \sum_{i=1}^{N} f_{N, 2}^2(x_i) + 16 K_\ast^2 d^4 \Vert f''_{N, 2} \Vert_{L^2(I')}^2 \\
		&\leq \frac{M \sigma^2}{N} \cdot \frac{16}{\delta \gamma_N} + d^4 \cdot \frac{16 K_\ast^2}{\delta}.
		\end{split}
	\end{equation*}
	By triangular inequality, we have
	\begin{equation*}
		\Vert e_N \Vert_{L^2(I')} \leq \Vert e_{N, 1} \Vert_{L^2(I')} + \Vert f_{N, 2} \Vert_{L^2(I')} \leq C_1 \Big( \frac{M \sigma^2}{N} \Big)^{\frac{1}{2}} + C_2 M^{-2}.
	\end{equation*}

	Next, we derive error estimates for derivative reconstruction.
	\blue{To give estimates of the $L^2$ norms of $e'_{N, 1}$ and $f'_{N, 2}$, we apply Sobolev interpolation inequality \eqref{eq:sobolev-interpolation} with $I_s = I'$ and $\epsilon_0 = \max \{\sqrt{\sigma}, (q-p) d\}$, and choose}
	% We apply Sobolev interpolation inequality \eqref{eq:sobolev-interpolation} with
	\begin{equation*}
		\epsilon = \max\left\{ \Big( \frac{M \sigma^2}{N} \Big)^{\frac{1}{4}}, (q-p) d \right\}.
	\end{equation*}
	\blue{Since $N \geq M$ and $q-p \geq 2 \sqrt{K_\ast}$, the following estimates hold for $\epsilon$:}
	% Also, note that $(q-p) d \geq 2 K_\ast^{1/2} d$, we have the following estimates for $\epsilon$
	\begin{equation*}
		\epsilon \leq \epsilon_0,\quad \epsilon^2 \leq \Big( \frac{M \sigma^2}{N} \Big)^{\frac{1}{2}} + 4 K_\ast d^2,\quad \epsilon^{-2} \leq \min \left\{ \Big( \frac{M \sigma^2}{N} \Big)^{-\frac{1}{2}}, ( 4 K_\ast d^2 )^{-1} \right\}.
	\end{equation*}
	\blue{Therefore, we can give the following estimate of the $L^2$ norm of $e'_{N, 1}$:}
	\begin{equation*}
		\begin{split}
		\Vert e'_{N, 1} \Vert_{L^2(I')}^2 &\leq K_\sigma \Big( \epsilon^{-2} \Vert e_{N, 1} \Vert_{L^2(I')}^2 + \epsilon^2 \Vert e''_{N, 1} \Vert_{L^2(I')}^2 \Big) \\
		&\leq K_\sigma \Big\{ \Big( \frac{M \sigma^2}{N} \Big)^{\frac{1}{2}} \cdot 4 \gamma_N^{-1} \Vert f'' \Vert_{L^2(0, 1)}^2 \\
		&\qquad+ (4 K_\ast)^{-1} d^2 \Big( \frac{9 \beta_N + 8}{2 \gamma_N} + 36 K_\ast^2 \beta_N + 64 K_\ast^2 \Big) \Vert f'' \Vert_{L^2(0, 1)}^2 \\
		&\qquad + \Big[ \Big( \frac{M \sigma^2}{N} \Big)^{\frac{1}{2}} + 4 K_\ast d^2 \Big] \Big(\frac{4}{9} \beta_N + 4\Big) \Vert f'' \Vert_{L^2(0, 1)}^2 \Big\} \\
		&= \Big( \frac{M \sigma^2}{N} \Big)^{\frac{1}{2}} \cdot K_\sigma \Big( 4 \gamma_N^{-1} + \frac{9}{4} \beta_N + 4 \Big) \Vert f'' \Vert_{L^2(0, 1)}^2 \\
		&\qquad+ d^2 \Big( \frac{K_\sigma (9 \beta_N + 8)}{8 K_\ast \gamma_N} + 18 K_\sigma K_\ast \beta_N + 32 K_\sigma K_\ast \Big) \Vert f'' \Vert_{L^2(0, 1)}^2.
		\end{split}
	\end{equation*}
	\blue{The same approach also applies to the estimate of $f'_{N, 2}$,}
	\begin{equation*}
		\begin{split}
		\Vert f'_{N, 2} \Vert_{L^2(I')}^2 &\leq K_\sigma \Big( \epsilon^{-2} \Vert f_{N, 2} \Vert_{L^2(I')}^2 + \epsilon^2 \Vert f''_{N, 2} \Vert_{L^2(I')}^2 \Big) \\
		&\leq K_\sigma \Big\{ \Big( \frac{M \sigma^2}{N} \Big)^{\frac{1}{2}} \cdot \frac{16}{\delta \gamma_N} + d^2 \cdot \frac{4 K_\ast}{\delta} + \Big[ \Big( \frac{M \sigma^2}{N} \Big)^{\frac{1}{2}} + 4 K_\ast d^2 \Big] \cdot \frac{1}{\delta} \Big\} \\
		&= \Big( \frac{M \sigma^2}{N} \Big)^{\frac{1}{2}} \cdot K_\sigma \Big( \frac{16}{\delta \gamma_N} + \frac{1}{\delta} \Big) + d^2 \cdot \frac{8 K_\sigma K_\ast}{\delta}.
		\end{split}
	\end{equation*}
	By triangular inequality, we have
	\begin{equation*}
		\Vert e'_N \Vert_{L^2(I')} \leq \Vert e_{N, 1} \Vert_{L^2(I')} + \Vert f_{N, 2} \Vert_{L^2(I')} \leq C_3 \Big( \frac{M \sigma^2}{N} \Big)^{\frac{1}{4}} + C_4 M^{-1}.
	\end{equation*}
\end{proof}
% subsection error_bounds_in_continuous_l2_norms (end)

\subsection{Convergence rates with randomly distributed observation points} % (fold)
\label{sub:convergence_rates_with_randomly_distributed_observation_points}

If the observation points are independent and identically distributed samples from a continuous random distribution, we show asymptotic convergence rates in probability as $N \to \infty$.

\begin{assumption}\label{assumption:random-observation-points}
	The observation points $\{x_i\}_{i = 1}^{N}$ are independent and identically distributed random variables with sample space $[0, 1]$, cumulative distribution function $F(x)$, and probability density function $\rho(x)$.
	$\rho(x)$ is continuous on $[0, 1]$.
	Moreover, observation points and noise are independent, that is, the two random sequences $\{x_i\}_{i=1}^{N}$ and $\{\eta_i\}_{i=1}^{N}$ are independent.
\end{assumption}

Under the above assumption, we show the relationship between the indicator function $\rho_N(x)$ and $\rho(x)$.
We first introduce Dvoretzky–Kiefer–Wolfowitz inequality, which generates cumulative distribution function based confidence bounds of the empirical distribution function.

\begin{lemma}[Dvoretzky–Kiefer–Wolfowitz inequality]
	Under Assumption \ref{assumption:random-observation-points}, let $F_N(x)$ be the empirical distribution function of $\{x_i\}_{i=1}^{N}$, which is given by
	\begin{equation}\label{eq:empirical-distribution-definition}
		F_N(x) = \frac{1}{N} \sum_{i=1}^{N} \mathbbm{1}_{x_i \leq x},\quad x \in [0, 1].
	\end{equation}
	Then, for arbitrary $\epsilon > 0$, \blue{the difference of $F_N(x)$ and $F(x)$ can be estimated as}
	\begin{equation}\label{eq:dkw-inequality}
		\mathbb{P} \Big( \sup_{x \in [0, 1]} \big\vert F_N(x) - F(x) \big\vert > \epsilon \Big) \leq 2 e^{-2 N \epsilon^2}.
	\end{equation}
\end{lemma}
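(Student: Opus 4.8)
The plan is to prove the stated inequality by the classical two‑step route: first transfer the problem to the case of an underlying uniform distribution via the probability integral transform, then bound the two one‑sided deviations separately, the factor $2$ on the right‑hand side being precisely the cost of the union bound combining them.

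\textbf{Step 1: reduction to the uniform case.} Since $\rho$ is continuous on $[0,1]$, the distribution function $F$ is continuous, so the random variables $U_i := F(x_i)$, $1 \le i \le N$, are independent and uniformly distributed on $[0,1]$. Writing $G_N(t) = \frac1N\sum_{i=1}^N \mathbbm{1}_{U_i \le t}$ for their empirical distribution function, the continuity of $F$ gives $\sup_{x\in[0,1]}|F_N(x)-F(x)| = \sup_{t\in[0,1]}|G_N(t)-t|$, so it suffices to prove $\mathbb{P}\big(\sup_{t\in[0,1]}|G_N(t)-t|>\epsilon\big)\le 2e^{-2N\epsilon^2}$.

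\textbf{Step 2: splitting and symmetry.} Set $D_N^{+} = \sup_t(G_N(t)-t)$ and $D_N^{-} = \sup_t(t-G_N(t))$, so the event of interest is $\{D_N^{+}>\epsilon\}\cup\{D_N^{-}>\epsilon\}$. The process $G_N(t)-t$ is right‑continuous, piecewise linear with slope $-1$ between consecutive order statistics $U_{(1)}\le\cdots\le U_{(N)}$, with an upward jump of $1/N$ at each $U_{(k)}$; hence $D_N^{+}=\max_{1\le k\le N}\big(k/N-U_{(k)}\big)$ and, similarly, $D_N^{-}=\max_{1\le k\le N}\big(U_{(k)}-(k-1)/N\big)$, which reduces both continuum suprema to maxima of $N$ variables. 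Replacing $U_i$ by $V_i = 1-U_i$ (again i.i.d.\ uniform) and using $V_{(k)}=1-U_{(N+1-k)}$ shows that $D_N^{-}$ has the same law as $D_N^{+}$; therefore a union bound reduces everything to the one‑sided estimate
\[
	\mathbb{P}\big(D_N^{+}>\epsilon\big)\le e^{-2N\epsilon^2}.
\]

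\textbf{Step 3: the one‑sided exponential bound.} This is the analytic heart of the argument and the step I expect to be the main obstacle, since producing the \emph{sharp} constant $2$ in the exponent — rather than merely some universal constant — is delicate. The idea is to analyse the first‑passage index $\tau = \min\{k : k/N - U_{(k)} > \epsilon\}$ (with $\tau=\infty$ if no such $k$ exists) and to exploit the conditional independence of the order statistics beyond the crossing time, together with a reflection/ballot‑type identity for the uniform empirical process; this leads to a bound $\mathbb{P}(D_N^{+}>\epsilon)\le \inf_{h>0} e^{-hN\epsilon}\varphi_N(h)$ for a suitable moment‑type quantity $\varphi_N$, which after optimisation in $h$ and a separate treatment of the ranges $\epsilon\le c/\sqrt N$ and $\epsilon>c/\sqrt N$ produces the exponent $2N\epsilon^2$. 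In the present paper the inequality is used only as an external tool, so the pragmatic route is to invoke Massart's sharp form of the Dvoretzky–Kiefer–Wolfowitz inequality directly from the literature; if one is content with a non‑sharp universal constant in place of the factor $2$, the original combinatorial argument of Dvoretzky, Kiefer and Wolfowitz already suffices and is considerably shorter.
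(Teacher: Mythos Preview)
Your outline is a sound sketch of how the DKW inequality is actually proved, and you correctly flag that the sharp constant in the exponent (and hence the leading factor $2$) requires Massart's 1990 refinement rather than the original 1956 argument. However, the paper itself does not prove this lemma at all: it is stated as a named classical result and invoked as an external tool, exactly as you anticipate in your final paragraph. So there is nothing to compare against in the paper's own treatment --- your proposal goes well beyond what the paper does, and your closing remark (``the pragmatic route is to invoke Massart's sharp form \ldots\ directly from the literature'') is precisely the route the paper takes.

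One small technical note on Step~1: the identity $\sup_{x}|F_N(x)-F(x)| = \sup_{t}|G_N(t)-t|$ as you have written it uses continuity of $F$ but not strict monotonicity; if $F$ has flat pieces then $F(x_i)$ may not be uniform, and the standard fix is to use the generalised inverse (quantile) transform in the other direction, writing $x_i = F^{-1}(U_i)$ for i.i.d.\ uniforms $U_i$. This is a routine patch and does not affect the rest of your argument.
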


\blue{When applying Dvoretzky–Kiefer–Wolfowitz inequality to the indicator function, the following lemma is valid.}

\begin{lemma}\label{lemma:rho-n-approximation-1}
	Under Assumption \ref{assumption:random-observation-points}, for arbitrary $\delta \in (0, 1)$, the estimate
	\begin{equation*} 
		\max_{1 \leq j \leq M} \Big\vert \frac{N_j}{N d} - \frac{1}{d} \int_{(j-1)d}^{jd} \rho(x) \,d{x} \Big\vert \leq \sqrt{\frac{2 \ln(2 / \delta)}{d^2 N}}.
	\end{equation*}
	holds with a probability of at least $1 - \delta$, \blue{where $N_j (N d)^{-1}$ is the value of indicator function $\rho_N(x)$ on subinterval $I_j$.}
\end{lemma}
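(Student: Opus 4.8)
The plan is to recognize that both quantities inside the absolute value are, up to the factor $1/d$, increments of distribution functions: $N_j$ is the number of $x_i$ in $I_j=((j-1)d,jd]$ (with $I_1=[0,d]$), so that, recalling the empirical distribution function $F_N$ of \eqref{eq:empirical-distribution-definition} and setting $F_N(0)=0$,
\begin{equation*}
\frac{N_j}{Nd} = \frac{1}{d}\big( F_N(jd) - F_N((j-1)d) \big), \quad 1 \leq j \leq M,
\end{equation*}
which holds almost surely for $j=1$ because continuity of $\rho$ forces $\mathbb{P}(x_i = 0) = 0$, and exactly for $j\geq 2$. On the other side, by the fundamental theorem of calculus,
\begin{equation*}
\frac{1}{d}\int_{(j-1)d}^{jd}\rho(x)\,d{x} = \frac{1}{d}\big( F(jd) - F((j-1)d) \big).
\end{equation*}
So the whole statement is just a quantitative control of $F_N - F$, i.e. a direct consequence of the Dvoretzky--Kiefer--Wolfowitz inequality \eqref{eq:dkw-inequality}.

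Concretely, I would first subtract the two displays and use the triangle inequality to get, for every $j$,
\begin{equation*}
\Big| \frac{N_j}{Nd} - \frac{1}{d}\int_{(j-1)d}^{jd}\rho(x)\,d{x} \Big|
\leq \frac{1}{d}\Big( \big|F_N(jd)-F(jd)\big| + \big|F_N((j-1)d)-F((j-1)d)\big| \Big)
\leq \frac{2}{d}\sup_{x\in[0,1]}\big|F_N(x)-F(x)\big|,
\end{equation*}
and note that taking $\max_{1\leq j\leq M}$ on the left leaves the right-hand side untouched. Then I would apply \eqref{eq:dkw-inequality} with $\epsilon = \sqrt{\ln(2/\delta)/(2N)}$, chosen so that $2e^{-2N\epsilon^2}=\delta$: with probability at least $1-\delta$ one has $\sup_{x\in[0,1]}|F_N(x)-F(x)|\leq\epsilon$, whence the maximum is at most $2\epsilon/d = \sqrt{2\ln(2/\delta)/(d^2N)}$, which is exactly the asserted bound.

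I do not expect a genuine obstacle here; the proof is short. The only points deserving a line of care are the endpoint bookkeeping for the half-open versus closed intervals $I_j$ (handled above via continuity of $F$, which also gives $F(0)=0$), and the observation that a single invocation of DKW — which controls the supremum over all of $[0,1]$ at once — already covers all $j$ simultaneously, so no union bound over $j=1,\dots,M$ is needed and the bound is independent of $M$ in this sense.
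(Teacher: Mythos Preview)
Your proof is correct and follows essentially the same approach as the paper: express $N_j/(Nd)$ and the integral as increments of $F_N$ and $F$ respectively, then apply the Dvoretzky--Kiefer--Wolfowitz inequality with $\epsilon=\sqrt{\ln(2/\delta)/(2N)}$. Your additional remarks on the endpoint bookkeeping for $I_1$ and on why a single invocation of DKW suffices without a union bound over $j$ are nice touches that the paper leaves implicit.
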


\begin{proof}
By \eqref{eq:dkw-inequality}, the following estimate of $F_N(x)$
\begin{equation*}
	F(x) - \epsilon \leq F_N(x) \leq F(x) + \epsilon,\quad \epsilon = \sqrt{\frac{\ln(2 / \delta)}{2 N}}
\end{equation*}
holds with a probability of at least $1 - \delta$.
By the definition of empirical distribution function \eqref{eq:empirical-distribution-definition}, \blue{the value of $\rho_N(x)$ at each subinterval $I_j$ can be written as}
\begin{equation*}
	\frac{N_j}{N d} = \frac{1}{d} \Big[ F_N(jd) - F_N \big( (j-1) d ) \Big],\quad j = 1, 2, \cdots, M.
\end{equation*}
Combine the above two formulas, \blue{the value of $\rho_N(x)$ is bounded by}
\begin{equation*}
	\frac{F(jd) - F((j-1) d)}{d} - \frac{2 \epsilon}{d} \leq \frac{N_j}{N d} \leq \frac{F(jd) - F((j-1) d)}{d} + \frac{2 \epsilon}{d},\quad j = 1, 2, \cdots, M.
\end{equation*}
\blue{Noting that $\rho(x)$ is the derivative of $F(x)$, we get to the conclusion}
% \begin{equation*}
% 	\frac{F(jd) - F((j-1) d)}{d} = \frac{1}{d} \int_{(j-1)d}^{jd} \rho(x) \,d{x}.
% \end{equation*}
\begin{equation*}
	\max_{1 \leq j \leq M} \Big\vert \frac{N_j}{N d} - \frac{1}{d} \int_{(j-1)d}^{jd} \rho(x) \,d{x} \Big\vert \leq \sqrt{\frac{2 \ln(2 / \delta)}{d^2 N}}.
\end{equation*}
\end{proof}

\begin{lemma}\label{lemma:rho-n-approximation-by-rho}
	Under Assumption \ref{assumption:random-observation-points}, suppose that $\rho(x)$ is bounded above by
	\begin{equation*}
		\sup_{x \in [0, 1]} \rho(x) \leq \beta.
	\end{equation*}
	For nonnegative integers p, q with $0 \leq p < q \leq M$, let subinterval $I' := (pd, qd)$.
	If $\rho(x)$ is also bounded below on $I'$ by
	\begin{equation*}
		\inf_{x \in I'} \rho(x) \geq \gamma > 0,
	\end{equation*}
	then for arbitrary $\delta \in (0, 1)$, provided that the number of observations satisfies
	\begin{equation*}
		N \geq \frac{8 \ln(2 / \delta)}{\gamma^2 d^2},
	\end{equation*}
	the following upper and lower bounds of indicator function
	\begin{equation}\label{eq:relationship-rho-n-rho}
	\begin{split}
		\sup_{x \in [0, 1]} \rho_N(x) &\leq 2 \beta, \\
		\inf_{x \in I'} \rho_N(x) &\geq \frac{\gamma}{2}
	\end{split}
	\end{equation}
	hold with a probability of at least $1 - \delta$.
\end{lemma}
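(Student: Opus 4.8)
The plan is to reduce everything to the concentration estimate already established in Lemma~\ref{lemma:rho-n-approximation-1}. First I would record the elementary observation that $\gamma \le \beta$: since $I' = (pd, qd)$ is nonempty, picking any $x_0 \in I'$ gives $\gamma \le \rho(x_0) \le \beta$. Next I would apply Lemma~\ref{lemma:rho-n-approximation-1} with the same $\delta$, so that with probability at least $1-\delta$,
\[
	\max_{1 \le j \le M} \Big| \frac{N_j}{Nd} - \frac{1}{d}\int_{(j-1)d}^{jd} \rho(x)\,d{x} \Big| \le \sqrt{\frac{2\ln(2/\delta)}{d^2 N}} =: \tau_N .
\]
The hypothesis $N \ge 8\ln(2/\delta)/(\gamma^2 d^2)$ is exactly what is needed to guarantee $\tau_N \le \gamma/2$, so from now on I work on the event above and treat $\tau_N \le \gamma/2$ as given.

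For the upper bound, I would note that for every $j$ the cell average $\frac1d\int_{(j-1)d}^{jd}\rho(x)\,d{x} \le \beta$, hence $N_j/(Nd) \le \beta + \tau_N \le \beta + \gamma/2 \le \tfrac{3}{2}\beta \le 2\beta$; since $\rho_N$ is piecewise constant with value $N_j/(Nd)$ on $I_j$, taking the maximum over $j$ yields $\sup_{x\in[0,1]}\rho_N(x) \le 2\beta$.

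For the lower bound, the slightly delicate point is that $I'$ is an open interval while $\rho_N$ is constant on the half-open cells $I_j$; I would first use continuity of $\rho$ to upgrade $\inf_{x\in I'}\rho(x) \ge \gamma$ to $\rho(x) \ge \gamma$ for all $x \in \overline{I'} = [pd, qd]$. Then, for each $j$ with $p+1 \le j \le q$ one has $I_j \subseteq [pd, qd]$, so $\frac1d\int_{(j-1)d}^{jd}\rho(x)\,d{x} \ge \gamma$, and therefore $N_j/(Nd) \ge \gamma - \tau_N \ge \gamma/2$. Since $\rho_N$ restricted to $I'$ takes only the values $N_j/(Nd)$ with $p+1 \le j \le q$, it follows that $\inf_{x\in I'}\rho_N(x) \ge \gamma/2$. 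Both bounds hold simultaneously on the single event of probability at least $1-\delta$ furnished by Lemma~\ref{lemma:rho-n-approximation-1}, which completes the argument. I do not anticipate any genuine obstacle here: the only points requiring care are the index bookkeeping for which cells $I_j$ lie inside $I'$, the use of $\gamma \le \beta$, and the continuity step needed to pass from the open interval to its closure.
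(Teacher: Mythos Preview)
Your proposal is correct and follows essentially the same route as the paper: both invoke Lemma~\ref{lemma:rho-n-approximation-1}, observe that the hypothesis on $N$ forces the deviation bound to be at most $\gamma/2$, and then read off the two inequalities from the cell averages of $\rho$. Your only additions are the explicit justification that $\gamma\le\beta$ (which the paper uses implicitly in the step $\beta+\gamma/2\le 2\beta$) and the continuity argument for passing to the closure; the latter is in fact unnecessary since $((j-1)d,jd)\subset I'$ already and endpoints have measure zero, but it does no harm.
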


\begin{proof}
	When $N \geq 8 \gamma^{-2} d^{-2} \ln(2 / \delta)$, we have
	\begin{equation*}
		\sqrt{\frac{2 \ln(2 / \delta)}{d^2 N}} \leq \frac{\gamma}{2}.
	\end{equation*}
	By Lemma \ref{lemma:rho-n-approximation-1}, the following bounds of indicator function
	\begin{equation*}
		\begin{split}
		\inf_{x \in I'} \rho_N(x) &= \min_{p+1 \leq j \leq q} \frac{N_j}{N d} \geq \frac{1}{d} \min_{p+1 \leq j \leq q} \int_{(j-1) d}^{jd} \rho(x) \,d{x} - \frac{\gamma}{2} \geq \frac{\gamma}{2}, \\
		\sup_{x \in [0, 1]} \rho_N(x) &= \max_{1 \leq j \leq M} \frac{N_j}{N d} \leq \frac{1}{d} \max_{1 \leq j \leq M} \int_{(j-1) d}^{jd} \rho(x) \,d{x} + \frac{\gamma}{2} \leq \beta + \frac{\gamma}{2} \leq 2 \beta
		\end{split}
	\end{equation*}
	hold with a probability of at least $1 - \delta$.
\end{proof}

Combine Lemma \ref{lemma:rho-n-approximation-by-rho} and Theorem \ref{theorem:error-bounds-given-n}, we have the following theorem:

\begin{theorem}\label{theorem:convergence-random-obs-points}
	Under Assumption \ref{assumption:random-observation-points}, suppose that $f \in W^{2, 2}(0, 1)$, $M \geq 3$, \blue{$\rho(x)$ is bounded above by}
	\begin{equation}
		\sup_{x \in [0, 1]} \rho(x) \leq \beta.
	\end{equation}
	\blue{For nonnegative integers $p$, $q$ with $0 \leq p < q \leq M$ and $q-p \geq 2 \sqrt{K_\ast}$, let subinterval $I := (pd, qd)$.}
	Choosing the regularization parameter $\alpha_N = M \sigma^2 / N + d^4$, if $\rho(x)$ is also bounded below on $I'$ by
	\begin{equation*}
		\inf_{x \in I'} \rho(x) \geq \gamma > 0,
	\end{equation*}
	then for arbitrary $\delta \in (0, 1)$, provided that
	\begin{equation*}
		N \geq \max\left\{M, \frac{8 \ln(4 / \delta)}{\gamma^2 d^2} \right\},
	\end{equation*}
	the following estimates for the $L^2$ norms of $e_N$ and $e'_N$
	\begin{equation*}
		\begin{split}
		\Vert e_N \Vert_{L^2(I')} &\leq C'_1 \Big( \frac{M \sigma^2}{N} \Big)^{\frac{1}{2}} + C'_2 M^{-2}, \\
		\Vert e'_N \Vert_{L^2(I')} &\leq C'_3 \Big( \frac{M \sigma^2}{N} \Big)^{\frac{1}{4}} + C'_4 M^{-1}
		\end{split}
	\end{equation*}
	hold with a probability of at least $1 - \delta$, where the constants are independent of $M$ and $N$.
	\blue{In Sobolev interpolation inequality \eqref{eq:sobolev-interpolation} with $I_s = I'$ and $\epsilon_0 = \max \{\sqrt{\sigma}, (q-p) d\}$, denoting by $K_\sigma$ the Sobolev constant $K(\epsilon_0, \vert I' \vert)$, the constants can be written as}
	\begin{equation*}
		\begin{split}
		C'_1 &= 2 \sqrt{2} \gamma^{-\frac{1}{2}} \Vert f'' \Vert_{L^2(0, 1)} + \frac{8}{\sqrt{\delta \gamma}}, \\
		C'_2 &= \Vert f'' \Vert_{L^2(0, 1)} \sqrt{\frac{18 \beta + 8}{\gamma} + 72 K_\ast^2 \beta + 64 K_\ast^2} + 4 \sqrt{2} K_\ast \delta^{-\frac{1}{2}}, \\
		C'_3 &= K_\sigma^{\frac{1}{2}} \Big( \Vert f'' \Vert_{L^2(0, 1)} \sqrt{8 \gamma^{-1} + \frac{9}{2} \beta + 4} + \delta^{-\frac{1}{2}} \sqrt{64 \gamma^{-1} + 2} \Big), \\
		C'_4 &= K_\sigma^{\frac{1}{2}} \Big( \Vert f'' \Vert_{L^2(0, 1)} \sqrt{\frac{9 \beta + 4}{2 K_\ast \gamma} + 36 K_\ast \beta + 32 K_\ast} + 4 \delta^{-\frac{1}{2}} K_\ast^{\frac{1}{2}} \Big).
		\end{split}
	\end{equation*}
\end{theorem}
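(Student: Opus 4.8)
The plan is to combine the deterministic-design error bounds of Theorem~\ref{theorem:error-bounds-given-n} with the probabilistic control of the indicator function provided by Lemma~\ref{lemma:rho-n-approximation-by-rho}, splitting the total failure probability $\delta$ evenly between the two events. First I would apply Lemma~\ref{lemma:rho-n-approximation-by-rho} with confidence parameter $\delta/2$: since by hypothesis $N \ge 8\ln(4/\delta)/(\gamma^2 d^2) = 8\ln(2/(\delta/2))/(\gamma^2 d^2)$, with probability at least $1-\delta/2$ the random indicator function satisfies $\sup_{x\in[0,1]}\rho_N(x) \le 2\beta$ and $\inf_{x\in I'}\rho_N(x) \ge \gamma/2$. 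Call this event $\mathcal{A}$; note that $\mathcal{A}$ depends only on the observation points $\{x_i\}$.

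Next, conditioned on a realization of the design lying in $\mathcal{A}$, I would invoke Theorem~\ref{theorem:error-bounds-given-n} with $\beta_N = 2\beta$, $\gamma_N = \gamma/2$, and confidence parameter $\delta/2$; the remaining hypotheses $f\in W^{2,2}(0,1)$, $M\ge 3$, $N\ge M$, $0\le p<q\le M$, $q-p\ge 2\sqrt{K_\ast}$, and the parameter choice $\alpha_N = M\sigma^2/N + d^4$ are all in force. This yields, on an event $\mathcal{B}$ of conditional probability at least $1-\delta/2$, the bounds $\|e_N\|_{L^2(I')} \le C_1(M\sigma^2/N)^{1/2} + C_2 M^{-2}$ and $\|e'_N\|_{L^2(I')} \le C_3(M\sigma^2/N)^{1/4} + C_4 M^{-1}$, where $C_1,\dots,C_4$ are exactly the constants displayed in Theorem~\ref{theorem:error-bounds-given-n} with $\beta_N,\gamma_N$ replaced by $2\beta,\gamma/2$. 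Because $\{\eta_i\}$ and $\{x_i\}$ are independent by Assumption~\ref{assumption:random-observation-points}, the conditional probability bound is uniform over $\mathcal{A}$, so $\mathbb{P}(\mathcal{B}\mid\mathcal{A})\ge 1-\delta/2$ and hence $\mathbb{P}(\mathcal{A}\cap\mathcal{B})\ge(1-\delta/2)^2\ge 1-\delta$.

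It then remains to substitute $\beta_N = 2\beta$ and $\gamma_N = \gamma/2$ into the formulas for $C_1,C_2,C_3,C_4$ and simplify, using $\gamma_N^{-1/2}=\sqrt{2}\,\gamma^{-1/2}$, $\gamma_N^{-1}=2\gamma^{-1}$, $9\beta_N=18\beta$, $\tfrac{9}{4}\beta_N=\tfrac{9}{2}\beta$, and so on; this produces $C'_1,\dots,C'_4$ in the stated form. The Sobolev constant $K_\sigma$ attached to $I_s=I'$ and $\epsilon_0=\max\{\sqrt{\sigma},(q-p)d\}$ is unchanged, since neither $\epsilon_0$ nor $|I'|$ depends on the design. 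All four constants are manifestly independent of $M$ and $N$.

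The only genuinely delicate point is the clean separation of the two randomness sources and the resulting union bound: one must be careful that event $\mathcal{A}$ is measurable with respect to the design alone so that Theorem~\ref{theorem:error-bounds-given-n}, whose randomness comes solely from the noise, may be applied conditionally with the fixed values $\beta_N=2\beta$, $\gamma_N=\gamma/2$. I would also verify that the hypothesis $N\ge 8\ln(4/\delta)/(\gamma^2 d^2)$ is precisely what Lemma~\ref{lemma:rho-n-approximation-by-rho} requires at confidence level $\delta/2$, and that $N\ge M$ guarantees $M\sigma^2/N \le \sigma^2$, which is what legitimizes the choice $\epsilon_0=\max\{\sqrt{\sigma},(q-p)d\}$ used inside Theorem~\ref{theorem:error-bounds-given-n}. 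Everything else is routine algebraic bookkeeping.
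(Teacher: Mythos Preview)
Your proposal is correct and follows essentially the same approach as the paper: apply Lemma~\ref{lemma:rho-n-approximation-by-rho} at level $\delta/2$ to obtain $\beta_N=2\beta$, $\gamma_N=\gamma/2$, then apply Theorem~\ref{theorem:error-bounds-given-n} conditionally at level $\delta/2$, and combine the two events. In fact your write-up is more careful than the paper's own proof, which simply says ``merging these two confidence intervals completes the proof'' without spelling out the independence argument or the algebraic substitution yielding $C'_1,\dots,C'_4$.
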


\begin{proof}
	By Lemma \ref{lemma:rho-n-approximation-by-rho}, \eqref{eq:relationship-rho-n-rho} holds with a probability of at least $1 - \delta / 2$.
	On the other hand, suppose that \eqref{eq:relationship-rho-n-rho} holds, by Theorem \ref{theorem:error-bounds-given-n}, the following estimates
	\begin{equation*}
		\begin{split}
		\Vert e_N \Vert_{L^2(I')} &\leq C'_1 \Big( \frac{M \sigma^2}{N} \Big)^{\frac{1}{2}} + C'_2 M^{-2}, \\
		\Vert e'_N \Vert_{L^2(I')} &\leq C'_3 \Big( \frac{M \sigma^2}{N} \Big)^{\frac{1}{4}} + C'_4 M^{-1}
		\end{split}
	\end{equation*}
	hold with a probability of at least $1 - \delta / 2$.
	Merging these two confidence intervals completes the proof.
\end{proof}

\begin{remark}[The choice of $M$]\label{remark:choice-of-m}
	Suppose that $\rho(x)$ has a positive lower bound on $[0, 1]$, we obtain optimal asymptotic convergence rates
	\begin{equation}\label{eq:convergence-rates-in-probability}
		\Vert e_N \Vert_{L^2(0, 1)} \sim O_{p}(N^{-2/5}),\quad \Vert e'_N \Vert_{L^2(0, 1)} \sim O_{p}(N^{-1/5})
	\end{equation}
	by letting $M \sim N^{1/5}$.
	Therefore, if $N$ is known in advance, one may choose $M \sim N^{1/5}$.
	On the other hand, if $N$ is unknown, one may choose $M$ according to requirements for accuracy, since the reconstruction accuracy is roughly the same as the cubic spline interpolation with knots $\{p_j\}_{j=0}^{M}$ when $N$ is sufficiently large.
\end{remark}

% subsection convergence_rates_with_randomly_distributed_observation_points (end)
% section error_analysis (end)

\section{Numerical examples} % (fold)
\label{sec:numerical_examples}

In this section we give some numerical examples.

\subsection{Numerical results with different distributions of observation points} % (fold)
\label{sub:numerical_results_with_different_distributions_of_observation_points}

Let the unknown function
\begin{equation*}
    f(x) = \frac{1}{100} \Big[ x^2 + 3 x + \sin 4 \pi x + 2 \exp\big( -8 (x - 2/5)^2 \big) \Big],
\end{equation*}
and
\begin{equation*}
    \sigma^2 = 5 \times 10^{-5},\quad N = 600,\quad M = 40,\quad \alpha_N = M \sigma^2 / N + d^4 = 3.72 \times 10^{-6}.
\end{equation*}
To simulate the real situations where observation points are unevenly distributed, we consider the following three types of distributions:
\begin{enumerate}
    \item Uniform: Observation points are uniformly distributed on $[0, 1]$.
    \item Concentrated on the left side: Observation points are uniformly distributed on $[0, 1/2)$ and $[1/2, 1]$ with probabilities of $0.95$ and $0.05$, respectively.
    \item Concentrated on both ends: Observation points are uniformly distributed on $[0, 1/5)$, $[1/5, 4/5]$, and $(4/5, 1]$ with probabilities of $0.475$, $0.05$, and $0.475$, respectively.
\end{enumerate}

Figures \ref{fig:complicated-uniform}, \ref{fig:complicated-left}, and \ref{fig:complicated-middle} are the reconstruction results of $f(x)$ with those three distributions of observation points.
Reconstructed functions and derivatives are shown in blue solid lines, and corresponding truths are shown in black dashed lines.
In Figure \ref{fig:complicated-uniform} where observation points are uniformly distributed, the reconstructed results are satisfactory.
In Figures \ref{fig:complicated-left} and \ref{fig:complicated-middle}, the results become inaccurate at locations where observation points are sparse.
It is notable that the corresponding histograms effectively indicates the quality of reconstruction results in different regions, i.e., reconstruction results are more accurate in regions with higher histogram bars, and vice versa.

\begin{figure}[htbp]
    \includegraphics[width=0.9\linewidth]{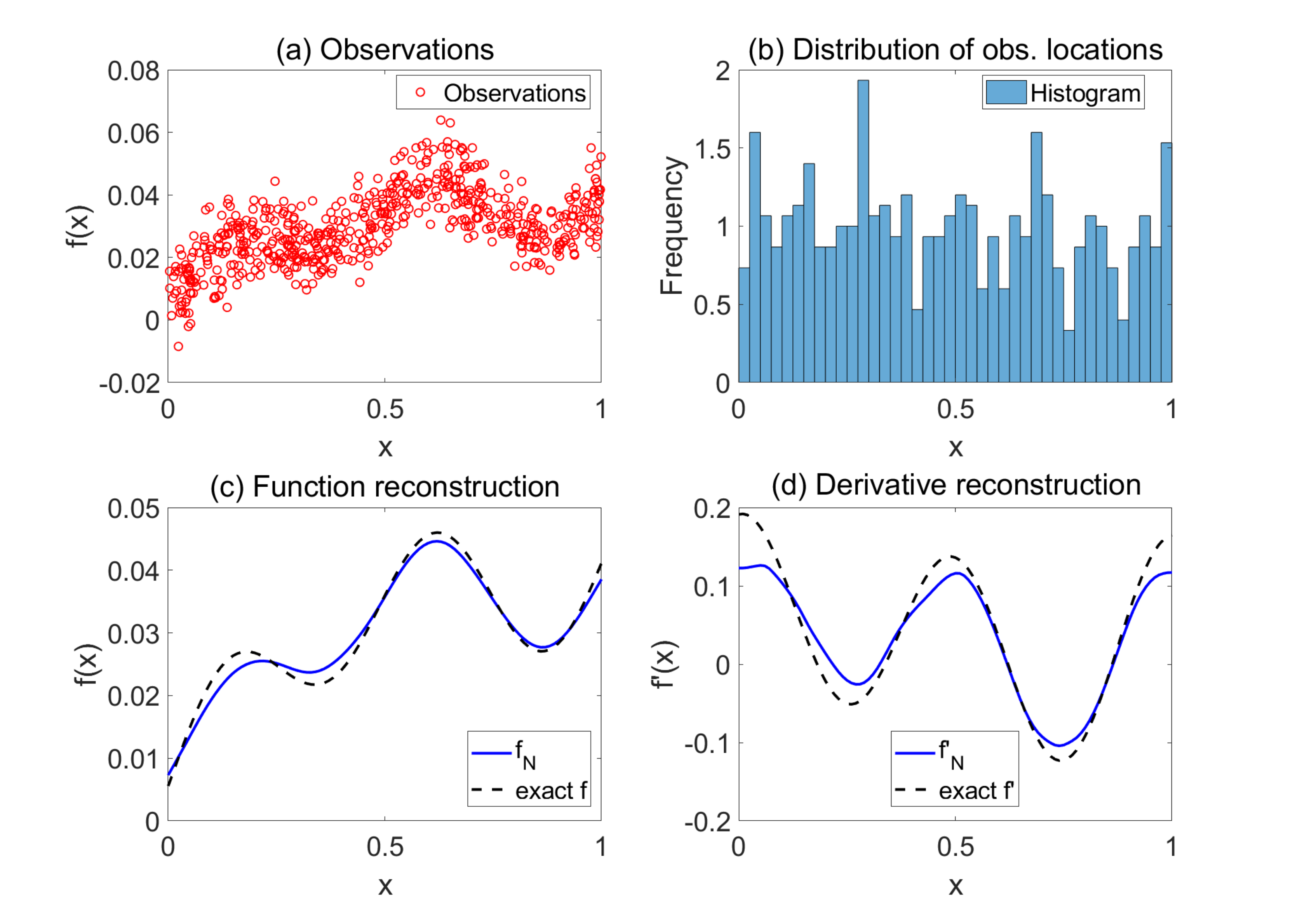}
    \caption{Reconstruction results of a more complicated function, observation points are uniformly distributed.}
    \label{fig:complicated-uniform}
\end{figure}

\begin{figure}[htbp]
    \includegraphics[width=0.9\linewidth]{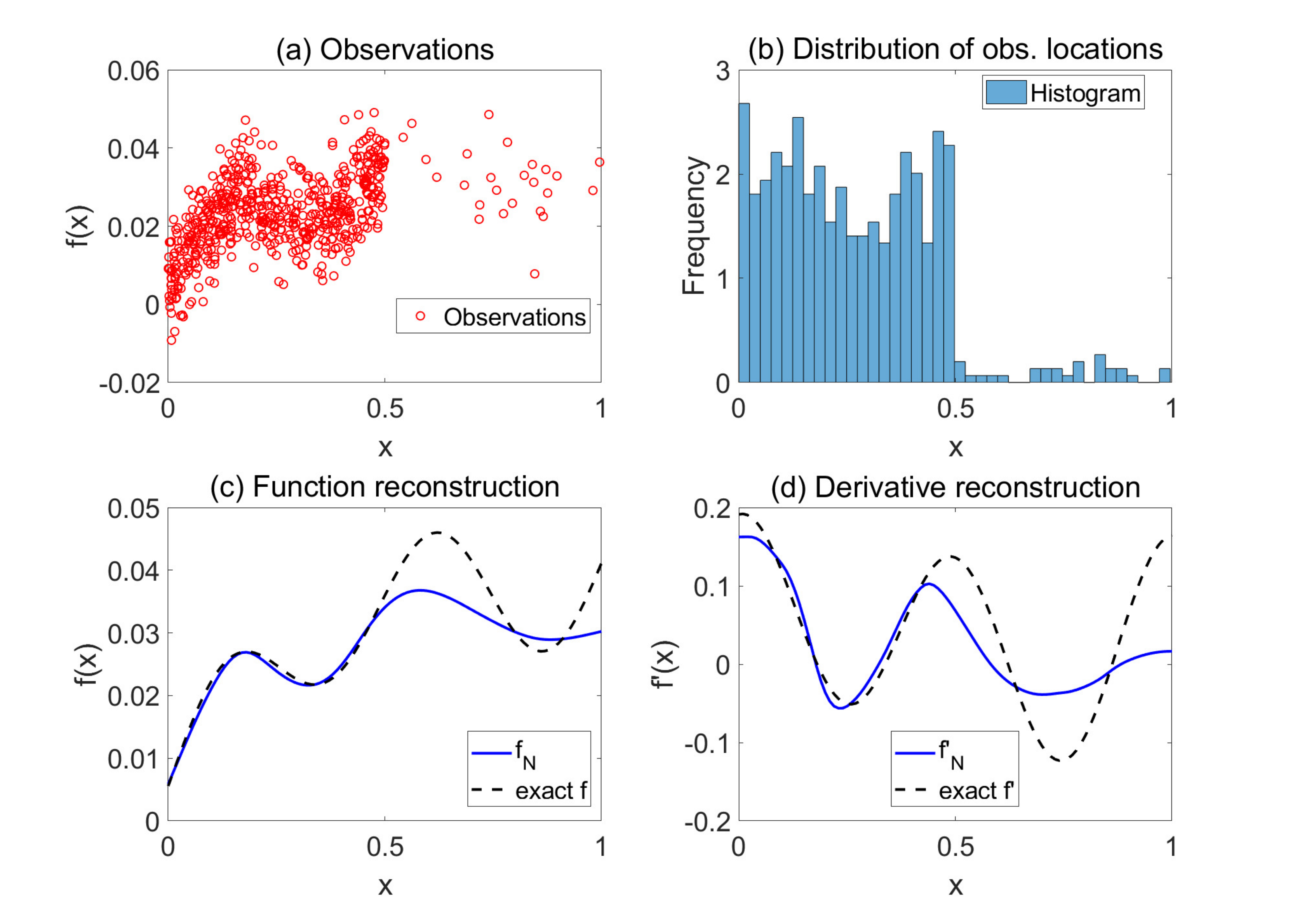}
    \caption{Reconstruction results of a more complicated function, observation points are concentrated on the left side.}
    \label{fig:complicated-left}
\end{figure}

\begin{figure}[htbp]
    \includegraphics[width=0.9\linewidth]{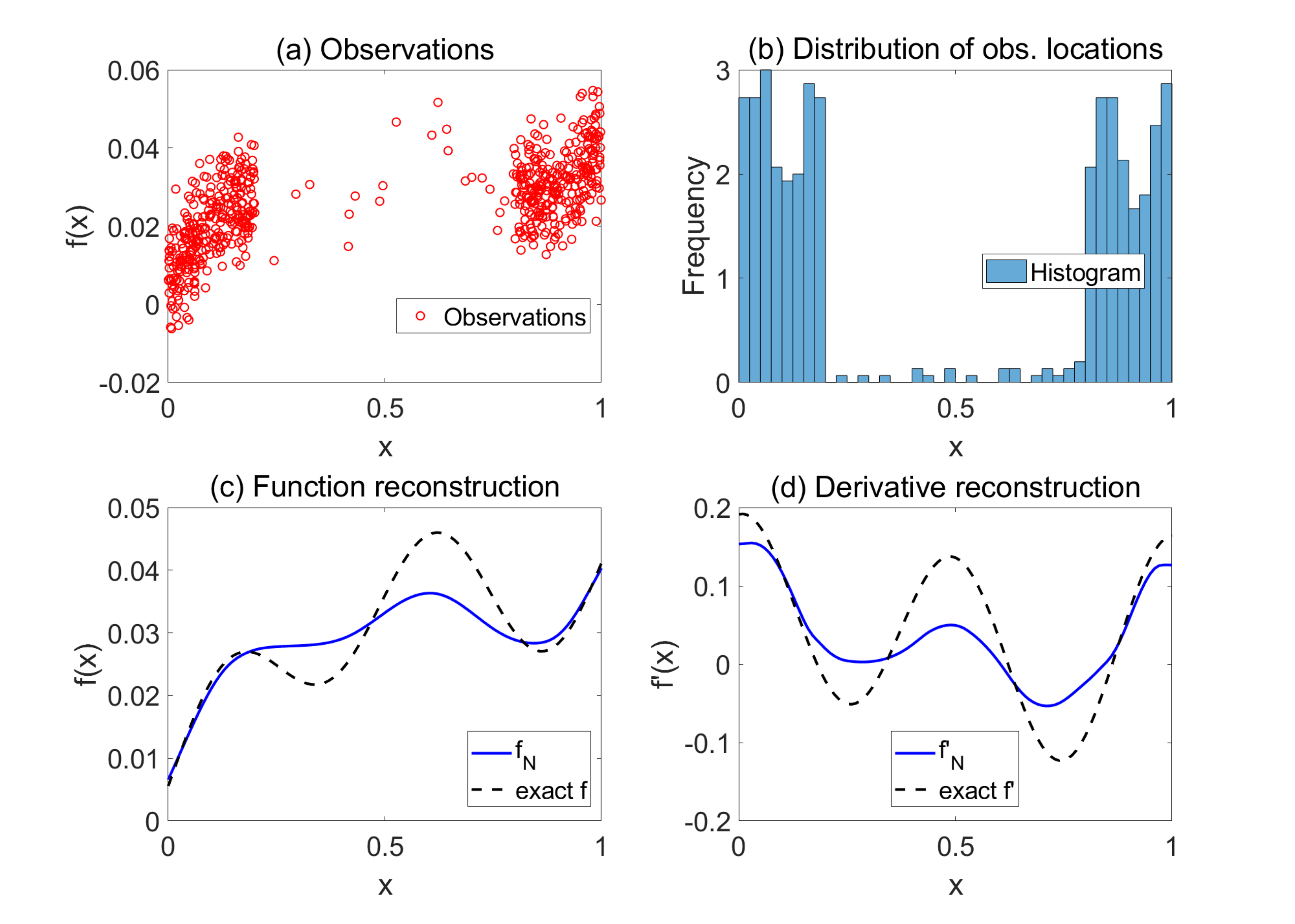}
    \caption{Reconstruction results of a more complicated function, observation points are concentrated on both ends.}
    \label{fig:complicated-middle}
\end{figure}

\newpage

\subsection{Asymptotic convergence rates} % (fold)
\label{sub:asymptotic_convergence_rates}

We design a numerical test to verify the asymptotic convergence rates of $\Vert e_N \Vert_{L^2(0, 1)} \sim N^{-2/5}$, $\Vert e'_N \Vert_{L^2(0, 1)} \sim N^{-1/5}$.
Let
\begin{equation*}
	f(x) = \frac{1}{100} \Big[ x^2 + 3 x + \sin 4 \pi x + 2 \exp\big( -8 (x - 2/5)^2 \big) \Big],\quad \sigma^2 = 1 \times 10^{-4}.
\end{equation*}
We vary the value of $M$ from $50$ to $250$ and let $N = M^5 / 10000$.
More precisely, the pairs of values of $N$ and $M$ that are used in this test can be found in the following table.

\begin{center}
\begin{tabular}{|c|c|c|c|c|c|c|c|}
	\hline
	$N$ & $31250$ & $77760$ & $168070$ & $327680$ & $590490$ & $1000000$ & $1610510$ \\
	\hline
	$M$ & $50$ & $60$ & $70$ & $80$ & $90$ & $100$ & $110$ \\
	\hline
	$N$ & $2488320$ & $3712930$ & $5378240$ & $7593750$ & $10485760$ & $14198570$ & $18895680$ \\
	\hline
	$M$ & $120$ & $130$ & $140$ & $150$ & $160$ & $170$ & $180$ \\
	\hline
	$N$ & $24760990$ & $32000000$ & $40841010$ & $51536320$ & $64363430$ & $79626240$ & $97656250$ \\
	\hline
	$M$ & $190$ & $200$ & $210$ & $220$ & $230$ & $240$ & $250$ \\
	\hline
\end{tabular}
\end{center}

For each pair of $N$ and $M$, we generate observation points that are uniformly distributed on $[0, 1]$, and then compute the $L^2$ errors of $\Vert e_N \Vert_{L^2(0, 1)}$ and $\Vert e'_N \Vert_{L^2(0, 1)}$.
We run this process for $12$ times, and take averages of those $L^2$ errors that are obtained in every run.
Figure \ref{fig:convergence-rates} verifies that the convergence rates are $O_p(N^{-2/5})$ for function reconstruction, and $O_p(N^{-1/5})$ for derivative reconstruction.

\begin{figure}[htbp]
	\includegraphics[width=1\linewidth]{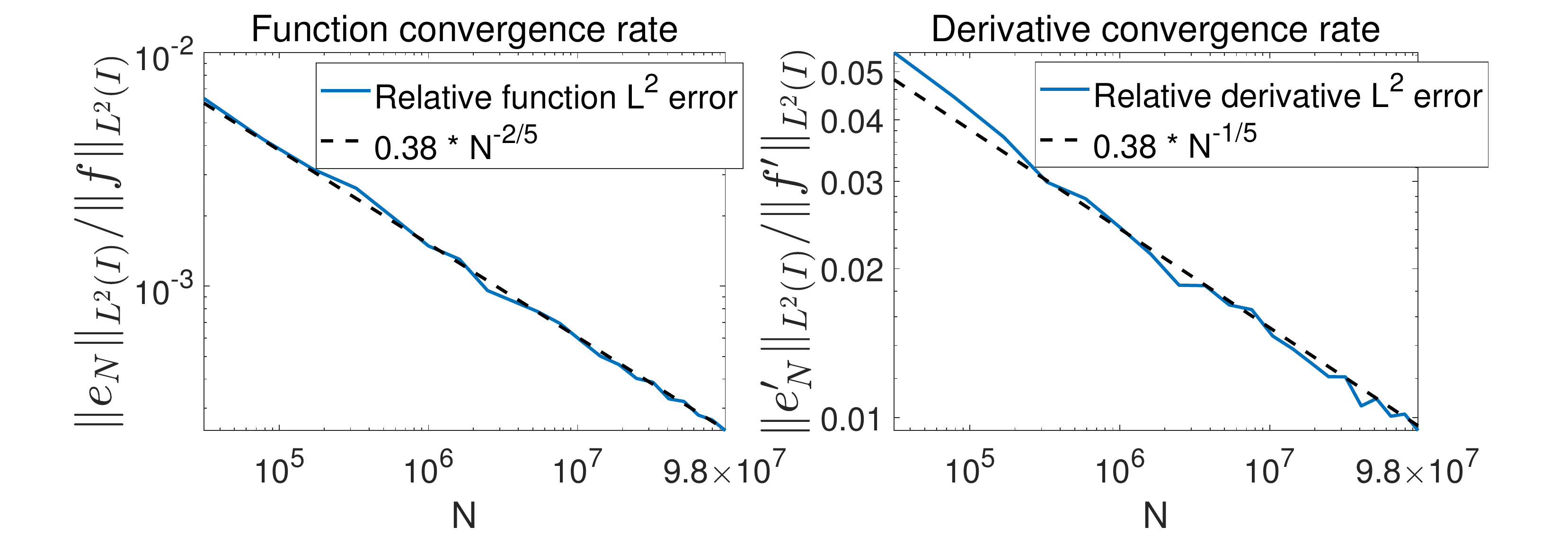}
	\caption{Convergence rates of function (left) and derivative (right) reconstructions.}
	\label{fig:convergence-rates}
\end{figure}

\newpage
% subsection asymptotic_convergence_rates (end)
% section numerical_examples (end)

\section{Conclusion} % (fold)
\label{sec:conclusion}

In this paper, we proposed a big data processing technique based on Tikhonov regularization for one-dimensional data with random noise.
The algorithm can process large datasets with low computational cost.
For error analysis, we proposed an indicator function that shows reliable regions with a given dataset.
Provided that observation points are randomly distributed, we also derived optimal asymptotic convergence rates with respect to sample size.
This technique can be used as a preprocessing method in numerical methods for inverse problems.

% section conclusion (end)

\appendix
\section{Cubic B-splines} % (fold)
\label{sec:appendix-a-b-splines}

We introduce cubic B-spline function to construct a basis for $V_M$, defined in Definition \ref{definition:vm}.
Readers may refer to \cite{Micula.1999} for details.

The cubic B-spline function $\phi_3(x)$ is given by
\begin{equation*}
	\phi_3(x) = \frac{1}{6} \times \begin{cases}
		(x+2)^3, &\text{ if }-2 \leq x \leq -1, \\
		(x+2)^3 - 4 (x+1)^3, &\text{ if }-1 < x \leq 0, \\
		(2-x)^3 - 4(1-x)^3, &\text{ if }0 < x \leq 1, \\
		(2-x)^3, &\text{ if }1 < x \leq 2, \\
		0, &\text{ if }\vert x \vert > 2.
	\end{cases}
\end{equation*}
Here, $\phi_3(x)$ is a piecewise polynomial of order $3$ that belongs to $C^2(\mathbb{R})$, and has a compact support of $[-2, 2]$.
By translation and scaling, $\phi_3(x)$ forms a basis of $V_M$.

\begin{proposition}[{\cite[Theorem~1.8]{Micula.1999}}]
	Let
	\begin{equation}\label{eq:psi-definition}
		\psi_j = \phi_3 \Big( \frac{x - p_j}{d} \Big),\quad -1 \leq j \leq M+1,
	\end{equation}
	then $\{\psi_j\}_{j = -1}^{M+1}$ forms a basis of the $(M+3)$-dimensional vector space $V_M$.
\end{proposition}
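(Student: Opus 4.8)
The plan is to combine three facts: that $\dim V_M = M+3$, that each $\psi_j$ with $-1\le j\le M+1$ lies in $V_M$, and that the family $\{\psi_j\}_{j=-1}^{M+1}$ is linearly independent on $[0,1]$. Once these are established the proposition follows at once, since a linearly independent subset of cardinality $\dim V_M$ in $V_M$ is automatically a basis.

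For the dimension count I would argue in the standard way: a cubic spline on $[0,1]$ with knots $p_0<p_1<\cdots<p_M$ is a polynomial of degree at most $3$ on each of the $M$ subintervals $[p_{k-1},p_k]$, contributing $4M$ coefficients, while the $C^2$ matching at each of the $M-1$ interior knots $p_1,\dots,p_{M-1}$ imposes $3$ independent linear conditions there, so $\dim V_M = 4M-3(M-1)=M+3$. For membership, $\phi_3$ is a $C^2(\mathbb R)$ piecewise cubic with breakpoints at $-2,-1,0,1,2$, hence $\psi_j(x)=\phi_3((x-p_j)/d)$ is a $C^2(\mathbb R)$ piecewise cubic whose only breakpoints are $p_{j-2},p_{j-1},p_j,p_{j+1},p_{j+2}$; restricted to $[0,1]=[p_0,p_M]$ every one of its interior breakpoints belongs to $\{p_0,\dots,p_M\}$, so $\psi_j|_{[0,1]}\in V_M$.

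The substantive step is linear independence, which I would handle by sweeping the subintervals from left to right using $\supp\psi_j=[p_{j-2},p_{j+2}]$. Suppose $g:=\sum_{j=-1}^{M+1}c_j\psi_j\equiv 0$ on $[0,1]$. On the first subinterval $(p_0,p_1)$ the only non-vanishing B-splines are $\psi_{-1},\psi_0,\psi_1,\psi_2$; with $t=x/d$ these restrict to the explicit cubics $\frac16(1-t)^3$, $\frac16[(2-t)^3-4(1-t)^3]$, $\frac16[(1+t)^3-4t^3]$ and $\frac16 t^3$, whose linear independence is settled by showing one explicit $4\times4$ coefficient determinant is nonzero, forcing $c_{-1}=c_0=c_1=c_2=0$. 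Then inductively, on $(p_{k-1},p_k)$ the non-vanishing B-splines are $\psi_{k-2},\psi_{k-1},\psi_k,\psi_{k+1}$, the first three coefficients are already known to vanish from earlier subintervals, and on this subinterval $\psi_{k+1}$ equals $\frac16 s^3$ with $s=(x-p_{k-1})/d$ and so is not identically zero, whence $c_{k+1}=0$. Running $k$ from $2$ to $M$ kills $c_3,\dots,c_{M+1}$ as well (for $M=1$ the single subinterval already carries all four relevant B-splines, so the base case alone finishes it), and we conclude $c_j=0$ for all $j$.

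The only ingredient beyond routine bookkeeping is the local linear independence of the four B-splines simultaneously active on one subinterval — the base case of the sweep — and this is where I expect the one genuine computation to enter, namely verifying that a single $4\times4$ determinant does not vanish; the rest is the dimension formula and the support accounting. This is exactly the content of \cite[Theorem~1.8]{Micula.1999}, to which we refer for the full details.
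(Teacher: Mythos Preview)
Your argument is correct. The paper itself gives no proof of this proposition --- it simply states it as a citation of \cite[Theorem~1.8]{Micula.1999} --- so there is no in-paper argument to compare against; your sketch actually goes further than the paper does. The dimension count $4M-3(M-1)=M+3$, the membership check via the support and breakpoint locations of $\psi_j$, and the left-to-right sweep exploiting $\supp\psi_j=[p_{j-2},p_{j+2}]$ are all standard and sound. The one genuine computation --- the $4\times4$ determinant of the local polynomial pieces on $(p_0,p_1)$ --- is indeed nonzero (it evaluates to $108$ in the monomial basis $\{1,t,t^2,t^3\}$ before the $1/6$ factors), and the inductive step is clean because exactly one new B-spline enters each successive subinterval. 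Your handling of the edge case $M=1$ is also right.
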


\FloatBarrier
\bibliographystyle{amsplain}
\bibliography{references}
%    Insert the bibliography data here.

\end{document}